\documentclass[11pt]{article}

\usepackage[margin=1in]{geometry}
\usepackage[T1]{fontenc}
\usepackage{lmodern}
\usepackage{amsmath,amssymb,amsthm,mathtools}
\usepackage{enumitem}
\usepackage[colorlinks=true,linkcolor=blue,citecolor=blue,urlcolor=blue]{hyperref}
\usepackage{microtype}

\hypersetup{
  pdftitle={Extremality and Limit Laws for the Siblings of the Coupon Collector},
  pdfauthor={Christopher D. Long},
  pdfsubject={Coupon collector problem; siblings model; stochastic ordering; limit laws},
  pdfkeywords={coupon collector, siblings, stochastic ordering, exponential order statistics, Poissonization, occupancy problems}
}

\title{Extremality and Limit Laws for the Siblings of the Coupon Collector}
\author{Christopher D. Long\thanks{Headlamp Software. Email: \texttt{galizur@gmail.com}.}}
\date{}

\newtheorem{theorem}{Theorem}
\newtheorem{lemma}{Lemma}
\newtheorem{proposition}{Proposition}
\newtheorem{corollary}{Corollary}
\newtheorem{conjecture}{Conjecture}
\newtheorem{hypothesis}{Hypothesis}
\newtheorem{remark}{Remark}

\newcommand{\E}{\mathbb{E}}
\newcommand{\Pbb}{\mathbb{P}}
\newcommand{\R}{\mathbb{R}}

\newcommand{\dd}{\,d}
\newcommand{\st}{\mathrm{st}}

\newcommand{\Exp}{\mathrm{Exp}}
\newcommand{\GammaDist}{\mathrm{Gamma}}
\newcommand{\Cov}{\operatorname{Cov}}
\newcommand{\Var}{\operatorname{Var}}
\newcommand{\Corr}{\operatorname{Corr}}
\newcommand{\e}{\mathrm e}
\newcommand{\ind}[1]{\mathbf 1_{\{#1\}}}

\allowdisplaybreaks

\begin{document}
\maketitle

\begin{abstract}
We study the siblings version of the coupon collector problem. A main collector stops when every coupon type has appeared at least once, duplicates are passed successively to later siblings, and $U_j^N$ denotes the number of empty spaces in collector $j$'s album at the main completion time. We prove three results. First, for every fixed $N$ and $j\ge2$, $\E U_j^N$ is uniquely maximized over positive coupon distributions by the uniform distribution; in fact it decreases strictly along every nonconstant ray from the uniform vector. Second, in the uniform model, $U_j^N$ is stochastically increasing in $N$, and we construct an increasing coupling using top spacings of exponential order statistics. Third, for fixed album indices $2,\ldots,J$, the naturally normalized vector converges jointly to $(W,\ldots,W)$, where $W$ is exponential with mean one. We also derive exact Poissonized and alternating-subset formulae and give a transfer principle for leading expectation asymptotics.
\end{abstract}

\noindent\textbf{Keywords.} Coupon collector problem; stochastic ordering; exponential order statistics; Poissonization; occupancy problems; limit theorem.

\medskip
\noindent\textbf{2020 Mathematics Subject Classification.} Primary 60C05; Secondary 60F05, 60G55, 60E15, 05A16.

\section{Introduction}

The siblings of the coupon collector are defined as follows. There are $N$ coupon types with probabilities
\[
        p=(p_1,\ldots,p_N),\qquad p_i>0,\qquad \sum_{i=1}^N p_i=1.
\]
A main collector samples coupons independently with replacement until every type has appeared at least once. A new coupon goes into the main collector's album. A duplicate is passed to the next collector in the sibling line; if it is a duplicate for that collector, it is passed on again, and so forth.

We use the following indexing convention throughout. Collector $1$ is the main collector. Collector $2$ is the first sibling, collector $3$ is the second sibling, and in general collector $j$ is the $(j-1)$st sibling for $j\ge2$. Thus $j$ is always a collector or album index, not a sibling number. For $j\ge2$, let $U_j^N$ denote the number of empty spaces in collector $j$'s album at the time collector $1$ completes her album; equivalently, $U_j^N$ counts coupon types that have appeared fewer than $j$ times by that completion time.

The classical coupon collector problem and its multiple-cover variants go back at least to standard treatments such as Feller \cite{Feller} and to the double Dixie cup problem of Newman and Shepp \cite{NewmanShepp}; see also Holst \cite{Holst} and the survey of Boneh and Hofri \cite{BonehHofri}. The siblings version was studied by Pintacuda \cite{Pintacuda}, Foata, Han, and Lass \cite{FoataHanLass}, Foata and Zeilberger \cite{FoataZeilberger}, Adler, Oren, and Ross \cite{AdlerOrenRoss}, Ross \cite{Ross}, and Doumas and Papanicolaou \cite{DoumasPapanicolaouSiblings}. In the equal-probability case, known formulae include
\begin{equation}\label{eq:equal-finite-intro}
        \E U_j^N=\sum_{k=1}^N \binom Nk (-1)^{k+1}k^{-(j-1)},\qquad j\ge2,
\end{equation}
with leading asymptotic
\begin{equation}\label{eq:equal-leading-intro}
        \E U_j^N\sim \frac{(\log N)^{j-1}}{(j-1)!}.
\end{equation}
For the first sibling $j=2$ in the equal-probability case, Papanicolaou and Doumas \cite{PapanicolaouDoumas} proved the distributional limit
\[
        \frac{U_2^N}{\log N}\Rightarrow \Exp(1).
\]
One consequence of the spacing representation below is a short probabilistic proof of this theorem, an extension to every fixed album index, and a joint limit showing that the normalized counts for fixed album indices share the same asymptotic random amplitude. The citation to this preprint is for attribution and comparison; the proof of the corresponding case is included below as part of the joint theorem.

For unequal probabilities, Adler, Oren, and Ross obtained the exact integral formula
\begin{equation}\label{eq:AOR-intro}
        \E_p U_j^N=\sum_{k=1}^N\int_0^\infty p_k\e^{-p_kt}\frac{(p_kt)^{j-1}}{(j-1)!}\prod_{i\ne k}(1-\e^{-p_it})\dd t.
\end{equation}
Doumas and Papanicolaou used this formula to obtain detailed asymptotics for broad families of unequal probabilities and formulated the following finite-$N$ extremal conjecture.

\begin{conjecture}[Doumas--Papanicolaou maximum conjecture]\label{conj:DP-max}
For fixed $N$ and $j\ge2$, the quantity $\E_p U_j^N$ is maximized over the positive probability simplex at the uniform vector
\[
        u=(1/N,\ldots,1/N).
\]
\end{conjecture}

In the same paper they observed that, in the equal-probability case, $\E U_j^N$ is increasing in $N$, and conjectured the stronger stochastic monotonicity
\begin{equation}\label{eq:stoch-conj-intro}
        U_j^N\le_{\st} U_j^{N+1},\qquad j\ge2.
\end{equation}
These two conjectural statements are of different types. Conjecture~\ref{conj:DP-max} is an extremal inequality over probability vectors for fixed $N$. The stochastic monotonicity statement compares equal-probability models with different alphabet sizes.

The main purpose of this paper is to prove both conjectural statements and to add the corresponding joint fixed-index limit law. Thus the uniform distribution is extremal at fixed $N$, the equal-probability residual count is monotone in the alphabet size, and the fixed-index residual vector has a one-dimensional exponential limit. The proofs use standard Poissonization, exponential order statistics, and finite-dimensional positivity. The exact formula \eqref{eq:AOR-intro} is first derived directly from independent Poisson processes. The same representation yields an alternating subset formula, which is the finite-dimensional starting point for the radial extremality proof. In the equal-probability case, top spacings of exponential order statistics give both the monotone coupling in $N$ and the joint limit theorem.

Compared with the preceding work, the finite-$N$ results below play a different role from the asymptotic formulae. The alternating subset identity gives a direct proof of the exact extremal inequality over all positive probability vectors, while the exponential-spacing construction gives a coupling that resolves the stochastic monotonicity conjecture and simultaneously identifies the joint fixed-index limit. The endpoint transfer theorem is included as a complementary asymptotic tool and is not used in the proofs of the finite-$N$ extremality or monotonicity results.

\paragraph{Note added after initial submission.}
After the initial version of this manuscript was submitted for publication, Doumas and Spektor posted the preprint \emph{Equal probabilities maximize the expected deficit in the siblings of the coupon collector} \cite{DoumasSpektor}. Their main theorem independently proves the finite-$N$ radial expectation-extremality result corresponding to Theorem~\ref{thm:radial-max} and Corollary~\ref{cor:max-conj} below. Their proof uses a separable integral representation, one integration by parts, and Chebyshev's covariance inequality. The proof given here is independent and uses the alternating subset formula and positive pair kernels. The remaining results of the present paper, including the stochastic monotonicity theorem, the nested exponential-spacing coupling, the joint fixed-index limit theorem, mixed-moment consequences, and the transfer theorem, are not contained in \cite{DoumasSpektor}.

\subsection*{Summary of results}

The results are as follows.
\begin{enumerate}[label=(\arabic*)]
\item We give a self-contained Poissonized derivation of the exact integral identity \eqref{eq:AOR-intro}, and an exact finite subset formula
\[
        \E_p U_j^N=\sum_{\varnothing\ne B\subseteq[N]}(-1)^{|B|-1}\sum_{k\in B}\left(\frac{p_k}{p_B}\right)^j,
        \qquad p_B:=\sum_{i\in B}p_i.
\]
This formula recovers \eqref{eq:equal-finite-intro} immediately. In the equal-probability case it also gives exact harmonic polynomial identities for the expectations.

\item We prove a transfer theorem for leading expectations. If the first-missing-type mass converges on a scale $B_N+C_Nx$ and the normalized marked density converges on the same scale, then the normalized expectation of $U_j^N$ converges to
\[
        \int_{-\infty}^{\infty}h_j(x)\e^{-\Lambda(x)}\dd x.
\]
The Gumbel case $\Lambda(x)=\e^{-x}$ and $h_j(x)=\e^{-x}$ gives the leading asymptotic by a universal integral equal to one.

\item We recover the equal-probability leading asymptotic \eqref{eq:equal-leading-intro} and the endpoint-Laplace leading term
\[
        \E U_j^N\sim \frac1{(j-1)!}\left(\log \frac{f(N)}{f'(N)}\right)^{j-1}
\]
for the decaying weight regime $w_k=1/f(k)$ under explicit endpoint-Laplace and tail-envelope hypotheses. In the standard decaying-probability examples this identifies the leading term of Doumas and Papanicolaou's three-term expansion.

\item We prove Conjecture~\ref{conj:DP-max} in the stronger radial form: for every nonuniform $p$,
\[
        \theta\mapsto \E_{u+\theta(p-u)}U_j^N
\]
is strictly decreasing on $(0,1]$. The derivative factors as
\[
        -\sum_{a<b}(p_a-p_b)(p_a^{j-1}-p_b^{j-1})K_{ab}^{(j)}(p),
\]
where every pair kernel $K_{ab}^{(j)}(p)$ is strictly positive.

\item In the equal-probability case, we prove the joint fixed-index distributional limit
\[
        \left(\frac{U_2^N}{a_{N,2}},\frac{U_3^N}{a_{N,3}},\ldots,\frac{U_J^N}{a_{N,J}}\right)
        \Rightarrow (W,W,\ldots,W),\qquad
        a_{N,j}=\frac{(\log N)^{j-1}}{(j-1)!},\quad W\sim\Exp(1).
\]
For $J=2$ this recovers the theorem of Papanicolaou and Doumas \cite{PapanicolaouDoumas}; for $J>2$ it gives the corresponding extension to fixed album indices $2,\ldots,J$. We also obtain convergence of fixed mixed moments, including variance, covariance, and asymptotic perfect correlation.

\item We prove the stochastic monotonicity conjecture \eqref{eq:stoch-conj-intro}. More precisely, for each fixed $j\ge2$ there is a coupling under which
\[
        U_j^2\le U_j^3\le U_j^4\le\cdots\qquad\text{almost surely}.
\]
The proof uses the independent top spacings of exponential order statistics. The almost-sure coupling also gives immediate increasing-transform and increasing-convex-order consequences; see Corollary~\ref{cor:order-consequences}.
\end{enumerate}

\section{Poissonization and exact formulae}

Let $N_i(t)$ be independent Poisson processes with rates $p_i$. The merged process has rate one, and the sequence of labels at its jump times is an iid sequence with distribution $p$. Thus the coupon counts at jump times have the same distribution as in the original discrete collector.

Let
\[
        E_i=\inf\{t:N_i(t)\ge1\},\qquad G_{i,j}=\inf\{t:N_i(t)\ge j\}.
\]
The main collector completes at
\[
        X_1=\max_{1\le i\le N}E_i.
\]
At this time coupon $i$ is missing from the $j$th collector's album if and only if $N_i(X_1)<j$, equivalently $G_{i,j}>X_1$.

\begin{proposition}[Exact Poissonized identity]\label{prop:poissonized}
For every $N\ge1$, every probability vector $p$, and every $j\ge2$,
\begin{equation}\label{eq:poissonized}
        \E_p U_j^N=\sum_{k=1}^N\int_0^\infty p_k\e^{-p_kt}\frac{(p_kt)^{j-1}}{(j-1)!}\prod_{i\ne k}(1-\e^{-p_it})\dd t.
\end{equation}
\end{proposition}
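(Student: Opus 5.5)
By the embedding just described, $U_j^N$ has the same law as $\sum_{k=1}^N \ind{G_{k,j}>X_1}$. Linearity of expectation then gives $\E_p U_j^N=\sum_k \Pbb(G_{k,j}>X_1)$, so it suffices to compute each summand.

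Fix $k$. On the event $\{G_{k,j}>X_1\}$ coupon $k$ has appeared at least once, so $E_k\le X_1<G_{k,j}$. Split this event according to which type is the last to appear. Almost surely all $E_i$ are distinct. The plan is to decompose on the time $s$ at which the last first-arrival occurs, and to condition on the path of $N_k$.

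The cleanest route is to note that $G_{k,j}>X_1$ holds if and only if $N_k(X_1)\le j-1$. The trajectory of $N_k$ is independent of the processes $(N_i)_{i\ne k}$. Set $M_k=\max_{i\ne k}E_i$, which is independent of $N_k$, with $\Pbb(M_k\le t)=\prod_{i\ne k}(1-\e^{-p_it})$. Then $X_1=\max(E_k,M_k)$, and the event becomes $\{E_k\le X_1 \text{ and } N_k(\max(E_k,M_k))\le j-1\}$. I would not compute this directly. Instead I would express it via the $j$th arrival time: the event is $\{G_{k,j}>\max(E_k,M_k)\}=\{G_{k,j}>M_k\}$, because $G_{k,j}>E_k$ automatically holds for $j\ge2$. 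This is the key simplification. Hence
\[
\Pbb(G_{k,j}>X_1)=\Pbb(M_k<G_{k,j})=\int_0^\infty f_{G_{k,j}}(t)\,\Pbb(M_k<t)\dd t,
\]
where $f_{G_{k,j}}(t)=p_k\e^{-p_kt}(p_kt)^{j-1}/(j-1)!$ is the $\GammaDist(j,p_k)$ density of the $j$th arrival of a rate-$p_k$ Poisson process.

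Substituting this density and the product distribution function of $M_k$, then summing over $k$, gives exactly \eqref{eq:poissonized}. The only points needing care are the justification of the Poisson embedding and the identity $\{G_{k,j}>X_1\}=\{G_{k,j}>M_k\}$.

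For the embedding, the labels of the successive jumps of the merged process are iid with law $p$, and each collector's album contents are a deterministic function of the label sequence up to the completion index. Hence the count of types seen fewer than $j$ times at the main completion is a function of the label sequence alone, independent of the jump times.

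For the identity, if $M_k\ge E_k$ the two events agree trivially. If $M_k<E_k$, then $X_1=E_k<G_{k,j}$ always holds, and $M_k<G_{k,j}$ also holds. Ties have probability zero. No integrability issue arises, since all terms are probabilities bounded by $1$. I expect the embedding justification to be the only mildly delicate step, and it is standard.
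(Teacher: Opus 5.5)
Your proposal is correct and follows the paper's proof: you use $G_{k,j}\ge E_k$ (valid since $j\ge2$) to replace $\{G_{k,j}>X_1\}$ by $\{G_{k,j}>\max_{i\ne k}E_i\}$, then integrate the $\GammaDist(j,p_k)$ density of $G_{k,j}$ against the independent distribution function $\prod_{i\ne k}(1-\e^{-p_it})$ and sum over $k$. Your added justification of the Poisson embedding matches the paper's remark that the merged process yields an iid label sequence; the abandoned ``decompose on the last type'' idea at the start is unnecessary and can be deleted.
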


\begin{proof}
Since $j\ge2$, $G_{k,j}\ge E_k$. Hence
\[
        \{G_{k,j}>X_1\}=\left\{G_{k,j}>\max_{i\ne k}E_i\right\}.
\]
The random variable $G_{k,j}$ has density
\[
        p_k\e^{-p_kt}\frac{(p_kt)^{j-1}}{(j-1)!},
\]
and is independent of $\{E_i:i\ne k\}$. Therefore
\[
\begin{aligned}
        \Pbb(G_{k,j}>X_1)
        &=\int_0^\infty p_k\e^{-p_kt}\frac{(p_kt)^{j-1}}{(j-1)!}
          \Pbb\left(\max_{i\ne k}E_i<t\right)\dd t \\
        &=\int_0^\infty p_k\e^{-p_kt}\frac{(p_kt)^{j-1}}{(j-1)!}
          \prod_{i\ne k}(1-\e^{-p_it})\dd t.
\end{aligned}
\]
Summing over $k$ proves the identity.
\end{proof}

\begin{lemma}[Rate normalization]\label{lem:rate-normalization}
For positive rates $r=(r_1,\ldots,r_N)$ define
\[
        I_j(r)=\sum_{k=1}^N\int_0^\infty r_k\e^{-r_kt}\frac{(r_kt)^{j-1}}{(j-1)!}\prod_{i\ne k}(1-\e^{-r_it})\dd t.
\]
Then $I_j(cr)=I_j(r)$ for every $c>0$. Consequently, if $p_i=r_i/\sum_\ell r_\ell$, then $\E_pU_j^N=I_j(r)$.
\end{lemma}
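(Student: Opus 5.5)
The plan is to prove the scale invariance by a change of variables in each integral, and then combine it with Proposition~\ref{prop:poissonized}.

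Fix $c>0$ and an index $k$. In the $k$th summand of $I_j(cr)$ the integrand is
\[
cr_k\e^{-cr_kt}\frac{(cr_kt)^{j-1}}{(j-1)!}\prod_{i\ne k}(1-\e^{-cr_it}).
\]
We substitute $s=ct$, so that $\dd t=\dd s/c$. The factor $c$ in front of $r_k$ cancels against $1/c$. Every other occurrence of $t$ appears only in the combination $ct=s$. Hence the $k$th summand equals
\[
\int_0^\infty r_k\e^{-r_ks}\frac{(r_ks)^{j-1}}{(j-1)!}\prod_{i\ne k}(1-\e^{-r_is})\dd s .
\]
The limits $0$ and $\infty$ are preserved because $c>0$. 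Summing over $k$ gives $I_j(cr)=I_j(r)$. All integrands are nonnegative and each integral is finite, being at most the Gamma density integral, which equals $1$. So no convergence issues arise.

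For the consequence, set $c=1/\sum_\ell r_\ell$. Then $cr=p$ is a positive probability vector, and the first part gives $I_j(r)=I_j(p)$. By Proposition~\ref{prop:poissonized}, the right-hand side of \eqref{eq:poissonized} is exactly $I_j(p)$, so $\E_pU_j^N=I_j(r)$.

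Alternatively, the statement can be seen probabilistically. Scaling all Poisson rates by $c$ amounts to running time at speed $c$. This leaves the sequence of labels, and therefore $U_j^N$, unchanged. The same derivation as in Proposition~\ref{prop:poissonized} then works with arbitrary positive rates.

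There is no real obstacle here. The only care needed is to check that the substitution transforms every factor consistently.
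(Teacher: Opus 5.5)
Your proposal is correct and matches the paper's proof: the substitution $s=ct$ gives $I_j(cr)=I_j(r)$, and then taking $c=(\sum_\ell r_\ell)^{-1}$ and applying Proposition~\ref{prop:poissonized} yields $\E_pU_j^N=I_j(r)$. Your observation that each integral is at most the total mass of a Gamma density, and hence finite, is a small detail the paper leaves implicit.
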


\begin{proof}
The identity $I_j(cr)=I_j(r)$ follows by the change of variables $s=ct$. Taking $c=(\sum_i r_i)^{-1}$ and applying Proposition~\ref{prop:poissonized} gives the final assertion.
\end{proof}

\begin{proposition}[Alternating subset formula]\label{prop:subset}
Let $p_B=\sum_{i\in B}p_i$. Then, for every $j\ge2$,
\begin{equation}\label{eq:subset}
        \E_pU_j^N=\sum_{\varnothing\ne B\subseteq[N]}(-1)^{|B|-1}\sum_{k\in B}\left(\frac{p_k}{p_B}\right)^j.
\end{equation}
\end{proposition}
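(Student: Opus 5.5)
Starting from Proposition~\ref{prop:poissonized}, I would expand the product by inclusion--exclusion and integrate term by term. For fixed $k$,
\[
        \prod_{i\ne k}(1-\e^{-p_it})=\sum_{A\subseteq[N]\setminus\{k\}}(-1)^{|A|}\e^{-p_At}.
\]
Substituting this and using the Gamma integral
\[
        \int_0^\infty p_k\e^{-(p_k+p_A)t}\frac{(p_kt)^{j-1}}{(j-1)!}\dd t=\left(\frac{p_k}{p_k+p_A}\right)^j,
\]
each $k$-term becomes $\sum_{A\not\ni k}(-1)^{|A|}\bigl(p_k/p_{A\cup\{k\}}\bigr)^j$. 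I then reindex by $B=A\cup\{k\}$, so that $k\in B$, $|A|=|B|-1$ and $p_k+p_A=p_B$. Interchanging the sums over $k$ and $B$ gives exactly \eqref{eq:subset}.

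The one step needing justification is exchanging the finite sum with the integral. This is harmless, since the sum is finite and each integrand is absolutely integrable: $p_k+p_A\ge p_k>0$ and the polynomial factor is dominated by the exponential. So linearity of the integral suffices, and no convergence issue arises.

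As a sanity check, I would confirm that the uniform case $p_i=1/N$ gives $\sum_{k\in B}(1/|B|)^j=|B|^{1-j}$. Grouping subsets by size then recovers \eqref{eq:equal-finite-intro}.

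I expect no real obstacle. The only places requiring care are the bookkeeping of the sign $(-1)^{|A|}=(-1)^{|B|-1}$ and checking that the Gamma normalization yields the exponent $j$ rather than $j-1$: the computation gives $p_k^{j}/(p_k+p_A)^j$, because the density carries the extra factor $p_k$.
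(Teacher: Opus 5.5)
Your proposal is correct and matches the paper's proof: expand $\prod_{i\ne k}(1-\e^{-p_it})$ over subsets $A\subseteq[N]\setminus\{k\}$, evaluate the Gamma integral to get $p_k^j/(p_k+p_A)^j$, and reindex by $B=A\cup\{k\}$. Your remark on exchanging the finite sum with the integral is valid; the paper leaves that step implicit.
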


\begin{proof}
Expand the product in \eqref{eq:poissonized}:
\[
        \prod_{i\ne k}(1-\e^{-p_it})=\sum_{A\subseteq[N]\setminus\{k\}}(-1)^{|A|}\e^{-p_At}.
\]
Then
\[
\begin{aligned}
        \E_p U_j^N
        &=\sum_{k=1}^N\sum_{A\subseteq[N]\setminus\{k\}}(-1)^{|A|}
          \int_0^\infty p_k\e^{-(p_k+p_A)t}\frac{(p_kt)^{j-1}}{(j-1)!}\dd t \\
        &=\sum_{k=1}^N\sum_{A\subseteq[N]\setminus\{k\}}(-1)^{|A|}\frac{p_k^j}{(p_k+p_A)^j},
\end{aligned}
\]
because $\int_0^\infty \e^{-st}t^{j-1}\dd t=(j-1)!s^{-j}$. Now put $B=A\cup\{k\}$. Then $|A|=|B|-1$ and $p_k+p_A=p_B$, giving \eqref{eq:subset}.
\end{proof}

\begin{corollary}[Equal-probability finite formula]\label{cor:equal-finite}
If $p_i=1/N$ for all $i$, then
\[
        \E U_j^N=\sum_{r=1}^N(-1)^{r-1}\binom Nr r^{-(j-1)}.
\]
\end{corollary}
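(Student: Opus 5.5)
This is a direct specialization of Proposition~\ref{prop:subset}, so the plan is to substitute $p_i=1/N$ into \eqref{eq:subset} and collect terms by subset size.

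First, for any nonempty $B\subseteq[N]$ with $|B|=r$, we have $p_B=r/N$. Hence each $k\in B$ satisfies $p_k/p_B=1/r$, and the inner sum is
\[
        \sum_{k\in B}\left(\frac{p_k}{p_B}\right)^j = r\cdot r^{-j}=r^{-(j-1)}.
\]
This quantity depends on $B$ only through its cardinality $r$, as does the sign $(-1)^{|B|-1}=(-1)^{r-1}$.

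Second, we group the outer sum by $r=|B|$. For each $r$ there are exactly $\binom Nr$ subsets of size $r$, so
\[
        \E U_j^N=\sum_{r=1}^N(-1)^{r-1}\binom Nr r^{-(j-1)},
\]
which is the claimed formula and agrees with \eqref{eq:equal-finite-intro}.

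There is no real obstacle here. The only points to check are that the exponent becomes $j-1$ because of the factor $|B|$ from summing over $k\in B$, and that the sign convention $(-1)^{|B|-1}$ carries over from \eqref{eq:subset} unchanged.
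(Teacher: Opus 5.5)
Your proposal is correct and follows the paper's proof: you specialize the alternating subset formula \eqref{eq:subset} to $p_i=1/N$, note that every subset of size $r$ contributes $(-1)^{r-1}r\cdot r^{-j}=(-1)^{r-1}r^{-(j-1)}$, and then count the $\binom Nr$ subsets of each size.
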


\begin{proof}
For a set $B$ of size $r$, $p_k/p_B=1/r$ for all $k\in B$. Hence
\[
        \sum_{k\in B}\left(\frac{p_k}{p_B}\right)^j=r\cdot r^{-j}=r^{-(j-1)}.
\]
Summing over the $\binom Nr$ sets of size $r$ gives the formula.
\end{proof}

\begin{corollary}[Harmonic-polynomial form]\label{cor:harmonic}
Let $H_N^{(m)}=\sum_{r=1}^N r^{-m}$ and let $h_m$ denote the complete homogeneous symmetric polynomial of degree $m$. In the equal-probability model,
\[
        \E U_j^N=h_{j-1}\left(1,\frac12,\ldots,\frac1N\right).
\]
In particular,
\[
        \E U_2^N=H_N,
\]
\[
        \E U_3^N=\frac12\left(H_N^2+H_N^{(2)}\right),
\]
and
\[
        \E U_4^N=\frac16\left(H_N^3+3H_NH_N^{(2)}+2H_N^{(3)}\right).
\]
\end{corollary}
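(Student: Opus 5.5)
Starting from Corollary~\ref{cor:equal-finite}, we have $\E U_j^N=\sum_{r=1}^N(-1)^{r-1}\binom Nr r^{-(j-1)}$, and the task is to identify this binomial sum as $h_{j-1}(1,\tfrac12,\ldots,\tfrac1N)$. The cleanest route avoids the binomial sum and uses the probabilistic picture of the Poissonized model with equal rates. By Lemma~\ref{lem:rate-normalization} we may take all rates equal to $1$. Then each $E_i$ is $\Exp(1)$, and the completion time $X_1=\max_i E_i$ has the Rényi representation
\[
X_1\overset{d}{=}\sum_{r=1}^N \frac{\xi_r}{r},
\]
with $\xi_r$ iid $\Exp(1)$.

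Conditionally on all first arrival times, the process of later arrivals of type $i$ after $E_i$ is a fresh rate-one Poisson process. Coupon $i$ is missing from album $j$ iff fewer than $j-1$ further arrivals occur in $(E_i,X_1]$. So
\[
\E U_j^N=\E\sum_i \Pbb\bigl(\mathrm{Poisson}(X_1-E_i)\le j-2\bigr).
\]
This is awkward. Instead, I would use the integral formula~\eqref{eq:poissonized} with $p_i=1$, which gives $\E U_j^N=N\int_0^\infty e^{-t}\frac{t^{j-1}}{(j-1)!}(1-e^{-t})^{N-1}dt$. Since $N e^{-t}(1-e^{-t})^{N-1}$ is the density of $X_1$, we get
\[
\E U_j^N=\E\frac{X_1^{j-1}}{(j-1)!}.
\]
The key step is then to compute $\E X_1^{m}/m!$ for $X_1=\sum_r \xi_r/r$. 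Its moment generating function is $\E e^{sX_1}=\prod_{r=1}^N(1-s/r)^{-1}$. Expanding each factor as a geometric series gives
\[
\prod_{r=1}^N \sum_{a\ge0}(s/r)^a=\sum_{m\ge0}s^m h_m\bigl(1,\tfrac12,\ldots,\tfrac1N\bigr).
\]
The coefficient of $s^m$ in $\E e^{sX_1}$ is exactly $\E X_1^m/m!$, valid for $|s|<1$. Taking $m=j-1$ yields $\E U_j^N=h_{j-1}(1,\ldots,1/N)$.

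The explicit cases follow from the Newton-type identities expressing $h_m$ in terms of power sums $p_k=H_N^{(k)}$. Concretely, $\sum_m h_m s^m=\exp\bigl(\sum_{k\ge1}p_k s^k/k\bigr)$, which gives
\[
h_1=p_1,\qquad h_2=\tfrac12(p_1^2+p_2),\qquad h_3=\tfrac16(p_1^3+3p_1p_2+2p_3).
\]
Substituting $p_k=H_N^{(k)}$ gives the three displayed formulas.

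The main obstacle is the step $\E U_j^N=\E X_1^{j-1}/(j-1)!$, specifically justifying the rate normalization and the identification of the density of the maximum. Both are immediate from Lemma~\ref{lem:rate-normalization} and independence. The Rényi representation can also be bypassed: compute $\E e^{sX_1}=N\int e^{st}e^{-t}(1-e^{-t})^{N-1}dt=N\,B(1-s,N)=\prod_r r/(r-s)$ directly via the Beta integral.
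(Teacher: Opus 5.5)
Your proof is correct, but it takes a different route from the paper's. The paper starts from the alternating binomial sum of Corollary~\ref{cor:equal-finite} and matches it to $h_{j-1}(1,\tfrac12,\ldots,\tfrac1N)$ purely algebraically. It uses the partial-fraction decomposition $N!/\prod_{r=1}^N(r-z)=\sum_{r=1}^N(-1)^{r-1}\binom Nr\, r/(r-z)$ of the generating function $\prod_r(1-z/r)^{-1}$ and then compares coefficients. You bypass the subset formula entirely. From \eqref{eq:poissonized} and Lemma~\ref{lem:rate-normalization} at unit rates you read off the identity $\E U_j^N=\E[T^{j-1}]/(j-1)!$, where $T=\max_i E_i$ is the Poissonized completion time. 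The product form $\E\e^{sT}=\prod_r(1-s/r)^{-1}$, from R\'enyi's representation or the Beta integral, then gives the claim by matching Taylor coefficients on $|s|<1$. The two computations are the same identity seen from opposite ends: expanding $(1-\e^{-t})^{N-1}$ binomially inside your Beta integral reproduces exactly the paper's partial fractions. What your route buys is interpretation. The harmonic form becomes a statement about the moments of the completion time, whose spacings are the same independent $\Exp(r)$ variables that drive Theorem~\ref{thm:coupling}. Your Newton step is just the cumulant expansion $\log\E\e^{sT}=\sum_{k\ge1}H_N^{(k)}s^k/k$, so the power sums $H_N^{(k)}$ are, up to the factor $(k-1)!$, the cumulants of $T$. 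The paper's route, in turn, directly reconciles the harmonic form with the classical alternating formula \eqref{eq:equal-finite-intro}, recording $h_m(1,\ldots,\tfrac1N)=\sum_r(-1)^{r-1}\binom Nr r^{-m}$ for every $m$.
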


\begin{proof}
The generating function of the complete homogeneous symmetric polynomials is
\[
        \sum_{m\ge0}h_m\left(1,\frac12,\ldots,\frac1N\right)z^m
        =\prod_{r=1}^N\frac1{1-z/r}
        =\frac{N!}{(1-z)(2-z)\cdots(N-z)}.
\]
The partial fraction decomposition is
\[
        \frac{N!}{(1-z)(2-z)\cdots(N-z)}
        =\sum_{r=1}^N(-1)^{r-1}\binom Nr\frac{r}{r-z}.
\]
Expanding each term as a power series in $z$ gives
\[
        h_m\left(1,\frac12,\ldots,\frac1N\right)=\sum_{r=1}^N(-1)^{r-1}\binom Nr r^{-m}.
\]
Taking $m=j-1$ and using Corollary~\ref{cor:equal-finite} proves the identity. The displayed formulae for $j=2,3,4$ are the first Newton identities for complete homogeneous symmetric polynomials.
\end{proof}

\section{A transfer theorem for leading expectations}

We state a transfer theorem for leading expectations. Its hypotheses are intentionally modular: they separate the first-completion probability from the marked density associated with the residual count.

Fix $j\ge2$. For each $N$, let $p_{N,1},\ldots,p_{N,N}$ be positive rates, not necessarily normalized to sum to one, and let $U_j^N$ denote the residual count for the probability vector obtained by normalizing these rates. By Lemma~\ref{lem:rate-normalization}, the Poissonized integral depends only on the ratios of the rates. Let $B_N\in\R$, $C_N>0$, and let $A_{N,j}>0$ be an abstract normalizing constant. Put
\[
        t_N(x)=B_N+C_Nx.
\]
Assume $B_N/C_N\to\infty$, so that the lower endpoint $t=0$ corresponds to $x=-B_N/C_N\to-\infty$.

Define the first-defect mass
\[
        M_N(x)=\sum_{i=1}^N\e^{-p_{N,i}t_N(x)},
\]
the atomlessness term
\[
        S_N(x)=\sum_{i=1}^N\e^{-2p_{N,i}t_N(x)},
\]
and the level-$j$ marked density
\[
        H_{N,j}(x)=C_N\sum_{i=1}^Np_{N,i}\e^{-p_{N,i}t_N(x)}\frac{(p_{N,i}t_N(x))^{j-1}}{(j-1)!}.
\]

\begin{theorem}[Transfer theorem for leading expectations]\label{thm:transfer}
Suppose that the following conditions hold.
\begin{enumerate}[label=(\roman*)]
\item For every compact $K\subset\R$, $M_N\to\Lambda$ uniformly on $K$.
\item For every compact $K\subset\R$, $S_N\to0$ uniformly on $K$.
\item For every compact $K\subset\R$,
\[
        \frac{H_{N,j}(x)}{A_{N,j}}\to h_j(x)
\]
uniformly on $K$.
\item The limiting integral is finite:
\[
        \int_{-\infty}^{\infty}h_j(x)\e^{-\Lambda(x)}\dd x<\infty.
\]
\item The normalized tails are negligible:
\[
\begin{aligned}
        \lim_{L\to\infty}\limsup_{N\to\infty}\frac1{A_{N,j}}
        \int_{\{x:|x|>L,\ t_N(x)>0\}}
        &C_N\sum_{i=1}^Np_{N,i}\e^{-p_{N,i}t_N(x)}\frac{(p_{N,i}t_N(x))^{j-1}}{(j-1)!}\\
        &\times \prod_{\ell\ne i}(1-\e^{-p_{N,\ell}t_N(x)})\dd x=0.
\end{aligned}
\]
\end{enumerate}
Then, where $\Lambda$ is the compact limit in (i),
\[
        \frac{\E U_j^N}{A_{N,j}}\to\int_{-\infty}^{\infty}h_j(x)\e^{-\Lambda(x)}\dd x.
\]
In particular, if $\Lambda(x)=\e^{-x}$ and $h_j(x)=\e^{-x}$, then $\E U_j^N\sim A_{N,j}$.
\end{theorem}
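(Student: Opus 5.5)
We start from Proposition~\ref{prop:poissonized} combined with Lemma~\ref{lem:rate-normalization}. These give $\E U_j^N=I_j(p_N)$, where $p_N$ denotes the unnormalized rates. In the integral we substitute $t=t_N(x)=B_N+C_Nx$, so that $\dd t=C_N\dd x$. This yields
\[
        \frac{\E U_j^N}{A_{N,j}}=\int_{\{x:\,t_N(x)>0\}} F_N(x)\dd x,
\]
where the integrand is
\[
        F_N(x)=\frac{C_N}{A_{N,j}}\sum_{i=1}^Np_{N,i}\e^{-p_{N,i}t_N(x)}\frac{(p_{N,i}t_N(x))^{j-1}}{(j-1)!}\prod_{\ell\ne i}\bigl(1-\e^{-p_{N,\ell}t_N(x)}\bigr).
\]
Because $B_N/C_N\to\infty$, the domain of integration eventually contains every compact set. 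We fix $L$ and split the integral into the part over $[-L,L]$ and the part over $|x|>L$. Hypothesis (v) says exactly that the tail part is negligible after first letting $N\to\infty$ and then $L\to\infty$. Hypothesis (iv) makes the tail $\int_{|x|>L}h_j\e^{-\Lambda}$ of the limit integral vanish as $L\to\infty$. It therefore suffices to show that $F_N\to h_j\e^{-\Lambda}$ uniformly on $[-L,L]$ for each fixed $L$.

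For the compact part, we write $q_i=\e^{-p_{N,i}t_N(x)}\in(0,1)$ for $x\in K=[-L,L]$. We compare $\prod_{\ell\ne i}(1-q_\ell)$ with $\e^{-M_N(x)}$, and this comparison is the main step. Using $-q-q^2\le\log(1-q)\le -q$ for $q\le 1/2$, we get
\[
        \e^{-\sum_\ell q_\ell-\sum_\ell q_\ell^2}\le\prod_{\ell}(1-q_\ell)\le \e^{-\sum_\ell q_\ell}.
\]
Hypothesis (ii) gives $\max_\ell q_\ell^2\le S_N\to0$ uniformly on $K$, so the condition $q\le1/2$ eventually holds for every $\ell$. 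Hypotheses (i) and (ii) then give $\prod_\ell(1-q_\ell)=\e^{-M_N(x)}(1+o(1))$ uniformly on $K$. To pass to the product with index $i$ omitted, we divide by $1-q_i$, which equals $1+O(\sqrt{S_N})$ uniformly in both $i$ and $x$. The result is
\[
        \prod_{\ell\ne i}(1-q_\ell)=\e^{-\Lambda(x)}\bigl(1+\varepsilon_N(x)\bigr),\qquad \sup_{i}\sup_{x\in K}|\varepsilon_N(x)|\to0.
\]
The point to be careful about is that this error bound must not depend on $i$, since it gets factored out of the sum over $i$.

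Once the uniform factorization is in hand, we have $F_N(x)=\bigl(H_{N,j}(x)/A_{N,j}\bigr)\e^{-\Lambda(x)}(1+\varepsilon_N(x))$. By (iii) this converges uniformly on $K$ to $h_j(x)\e^{-\Lambda(x)}$; the product of uniformly convergent factors converges uniformly here because $\Lambda$, the limit of the continuous $M_N$, is bounded on $K$, and $h_j$ is bounded on $K$ as a uniform limit of continuous functions. Integrating over the bounded set $K$ gives $\int_K F_N\to\int_K h_j\e^{-\Lambda}$. A standard $\varepsilon/3$ argument combining this with the two tail bounds proves the theorem. For the Gumbel case we compute $\int_{\R}\e^{-x}\e^{-\e^{-x}}\dd x=1$ via the substitution $y=\e^{-x}$, which gives $\E U_j^N\sim A_{N,j}$.
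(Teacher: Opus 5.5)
Your proof is correct and follows the paper's argument. You substitute $t=t_N(x)$, show that $\prod_{\ell\ne i}(1-q_\ell)=\e^{-\Lambda(x)}(1+o(1))$ uniformly in $i$ and $x\in K$ using the logarithmic expansion controlled by $S_N$, deduce uniform convergence of the integrand on compacts, and dispose of the tails with (iv) and (v). The only cosmetic difference is that you approximate the full product and then divide by $1-q_i$, whereas the paper expands $\log P_{N,i}=-M_N+q_{N,i}+O(S_N)$ directly.
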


\begin{proof}
Write
\[
        q_{N,i}(x)=\e^{-p_{N,i}t_N(x)}.
\]
After changing variables $t=t_N(x)$ in \eqref{eq:poissonized},
\[
        \E U_j^N=\int_{-B_N/C_N}^{\infty}\sum_{i=1}^N C_Np_{N,i}q_{N,i}(x)\frac{(p_{N,i}t_N(x))^{j-1}}{(j-1)!}P_{N,i}(x)\dd x,
\]
where
\[
        P_{N,i}(x)=\prod_{\ell\ne i}(1-q_{N,\ell}(x)).
\]
Fix a compact $K\subset\R$. Since $S_N\to0$ uniformly on $K$, also $\max_i q_{N,i}\to0$ uniformly on $K$. Therefore, uniformly for $i$ and $x\in K$,
\[
\begin{aligned}
        \log P_{N,i}(x)
        &=\sum_{\ell\ne i}\log(1-q_{N,\ell}(x))\\
        &=-\sum_{\ell\ne i}q_{N,\ell}(x)+O\left(\sum_{\ell\ne i}q_{N,\ell}(x)^2\right)\\
        &=-M_N(x)+q_{N,i}(x)+O(S_N(x))=-M_N(x)+o(1).
\end{aligned}
\]
Hence $P_{N,i}(x)=\e^{-\Lambda(x)}+o(1)$ uniformly in $i$ and $x\in K$. Thus, uniformly on $K$,
\[
        \frac1{A_{N,j}}\sum_{i=1}^N C_Np_{N,i}q_{N,i}(x)\frac{(p_{N,i}t_N(x))^{j-1}}{(j-1)!}P_{N,i}(x)
        \to h_j(x)\e^{-\Lambda(x)}.
\]
Integrating over $K$ gives convergence on compact sets. The tail assumption and the integrability of $h_j\e^{-\Lambda}$ allow $K=[-L,L]$ and then $L\to\infty$. This proves the first claim. In the Gumbel case,
\[
        \int_{-\infty}^{\infty}\e^{-x}\e^{-\e^{-x}}\dd x=1
\]
by the substitution $y=\e^{-x}$. The final claim follows.
\end{proof}

\section{Leading expectation asymptotics}

\subsection*{Equal probabilities}

Let $p_{N,i}=1/N$. Choose
\[
        B_N=N\log N,
        \qquad C_N=N,
        \qquad t_N(x)=N(\log N+x).
\]
Then
\[
        M_N(x)=N\e^{-t_N(x)/N}=\e^{-x},
        \qquad S_N(x)=N\e^{-2t_N(x)/N}=N^{-1}\e^{-2x}\to0.
\]
Moreover
\[
\begin{aligned}
        H_{N,j}(x)
        &=N\sum_{i=1}^N\frac1N\e^{-(\log N+x)}\frac{(\log N+x)^{j-1}}{(j-1)!}\\
        &=\e^{-x}\frac{(\log N+x)^{j-1}}{(j-1)!}.
\end{aligned}
\]
Thus with
\[
        a_{N,j}=\frac{(\log N)^{j-1}}{(j-1)!},
\]
one has $H_{N,j}(x)/a_{N,j}\to\e^{-x}$ uniformly on compact sets. In this application of the transfer theorem, $A_{N,j}=a_{N,j}$. The remaining tail condition is elementary.

\begin{lemma}[Equal-probability tails]\label{lem:equal-tails}
For the equal-probability scaling above, the normalized tail condition in Theorem~\ref{thm:transfer} holds.
\end{lemma}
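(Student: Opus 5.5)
The plan is to write the integrand explicitly in the equal-probability case and bound it by an integrable envelope. With $p_{N,i}=1/N$ and $t_N(x)=N(\log N+x)$, the integrand in condition (v) is
\[
        g_N(x):=\e^{-x}\frac{(\log N+x)^{j-1}}{(j-1)!}\left(1-\frac{\e^{-x}}{N}\right)^{N-1},
        \qquad x>-\log N,
\]
because $C_N\sum_i p_{N,i}=N$ and $\e^{-p_{N,i}t_N(x)}=\e^{-x}/N$. We must show that $a_{N,j}^{-1}\int_{\{|x|>L,\ x>-\log N\}}g_N(x)\dd x$ is small, uniformly in large $N$, once $L$ is large. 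The region splits into the right tail $x>L$ and the left tail $-\log N<x<-L$, and we treat them separately.

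\emph{Right tail.} Bound the product factor by $1$. For $x>0$ we have $(\log N+x)^{j-1}\le 2^{j-2}\bigl((\log N)^{j-1}+x^{j-1}\bigr)$. Dividing by $a_{N,j}$ and using $\log N\ge1$ for large $N$, we get
\[
        \frac{g_N(x)}{a_{N,j}}\le 2^{j-2}\e^{-x}\bigl(1+x^{j-1}\bigr).
\]
The right-hand side is integrable on $(0,\infty)$ and independent of $N$. Hence the right-tail contribution is at most $2^{j-2}\int_L^\infty \e^{-x}(1+x^{j-1})\dd x$, which tends to $0$ as $L\to\infty$.

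\emph{Left tail.} This is the step I expect to be the main obstacle. Here the prefactor $\e^{-x}$ blows up, so the decay must come from the product factor. Use $1-y\le \e^{-y}$ to get
\[
        \left(1-\frac{\e^{-x}}N\right)^{N-1}\le \exp\!\left(-\frac{N-1}{N}\e^{-x}\right)\le \exp\!\left(-\tfrac12\e^{-x}\right)\qquad(N\ge2).
\]
For $-\log N<x<0$ we also have $0<\log N+x\le\log N$, so $(\log N+x)^{j-1}/(j-1)!\le a_{N,j}$. Therefore
\[
        \frac{g_N(x)}{a_{N,j}}\le \e^{-x}\exp\!\left(-\tfrac12\e^{-x}\right).
\]
This envelope is integrable on $(-\infty,0)$: substituting $y=\e^{-x}$ gives $\int_{\e^{L}}^\infty \e^{-y/2}\dd y=2\e^{-\e^{L}/2}$, which tends to $0$. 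So the left tail vanishes uniformly in $N$ as $L\to\infty$.

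Combining the two tails, the $\limsup_N$ of the normalized tail integral is bounded by a quantity depending only on $L$ that tends to $0$, which is condition (v). As a byproduct, Theorem~\ref{thm:transfer} then yields \eqref{eq:equal-leading-intro}.
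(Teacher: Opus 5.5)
Your proposal is correct and follows the paper's proof essentially step for step. You write the same explicit integrand, bound the product by $1$ on the right tail, and on the left tail use the same two bounds $(1+x/\log N)^{j-1}\le 1$ and $(1-\e^{-x}/N)^{N-1}\le\exp\{-\tfrac12\e^{-x}\}$, giving the envelope $\e^{-x}\exp\{-\tfrac12\e^{-x}\}$. The only cosmetic difference is your right-tail envelope: you get $2^{j-2}\e^{-x}(1+x^{j-1})$ by convexity, where the paper gets $\e^{-x}(1+x)^{j-1}$ directly from $\log N\ge 1$.
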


\begin{proof}
After changing variables, the normalized transfer integrand is, for $x>-\log N$,
\[
        I_N(x)=\e^{-x}\left(1+\frac{x}{\log N}\right)^{j-1}\left(1-\frac{\e^{-x}}{N}\right)^{N-1}.
\]
For $x\ge0$ and $N\ge3$,
\[
        I_N(x)\le \e^{-x}(1+x)^{j-1},
\]
whose right tail is integrable uniformly in $N$. For $-\log N<x\le0$,
\[
        \left(1+\frac{x}{\log N}\right)^{j-1}\le1
\]
and
\[
        \left(1-\frac{\e^{-x}}N\right)^{N-1}\le \exp\left\{-\frac{N-1}{N}\e^{-x}\right\}\le\exp\left\{-\frac12\e^{-x}\right\}
\]
for all $N\ge2$. Hence
\[
        I_N(x)\le \e^{-x}\exp\left\{-\frac12\e^{-x}\right\},\qquad -\log N<x\le0,
\]
and this envelope has a vanishing left tail. Combining the two estimates proves the normalized tail negligibility.
\end{proof}

\begin{corollary}[Equal-probability leading asymptotic]\label{cor:equal-leading}
For every fixed $j\ge2$,
\[
        \E U_j^N\sim \frac{(\log N)^{j-1}}{(j-1)!}.
\]
\end{corollary}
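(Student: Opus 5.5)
The plan is to apply Theorem~\ref{thm:transfer} with $p_{N,i}=1/N$, $B_N=N\log N$, $C_N=N$ and $A_{N,j}=a_{N,j}=(\log N)^{j-1}/(j-1)!$, as set up just above. The computations preceding Lemma~\ref{lem:equal-tails} give the first three hypotheses. Hypothesis (i) holds with $\Lambda(x)=\e^{-x}$, since $M_N(x)=\e^{-x}$ identically. Hypothesis (ii) holds because $S_N(x)=N^{-1}\e^{-2x}$ tends to zero uniformly on compact sets. For hypothesis (iii), note that $H_{N,j}(x)/a_{N,j}=\e^{-x}(1+x/\log N)^{j-1}$, and this converges to $\e^{-x}$ uniformly on compact sets because $x/\log N\to0$ uniformly there. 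So $h_j(x)=\e^{-x}$.

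Hypothesis (iv) is the Gumbel integral $\int\e^{-x}\e^{-\e^{-x}}\dd x=1$, which is finite. Hypothesis (v) is exactly Lemma~\ref{lem:equal-tails}. That lemma supplies an $N$-independent integrable envelope: $\e^{-x}(1+x)^{j-1}$ on $x\ge0$, and $\e^{-x}\exp\{-\tfrac12\e^{-x}\}$ on $(-\log N,0]$. Both envelopes have vanishing tails as $L\to\infty$.

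One check is needed. The transfer theorem's integrand is $C_N\sum_i(\cdots)\prod_{\ell\ne i}$, normalized by $A_{N,j}$. This must coincide with the $I_N$ used in Lemma~\ref{lem:equal-tails}. Indeed, $N\cdot N\cdot\frac1N\e^{-\log N-x}(\log N+x)^{j-1}/(j-1)!$ divided by $a_{N,j}$ equals $\e^{-x}(1+x/\log N)^{j-1}$. The product factor is $(1-\e^{-x}/N)^{N-1}$, as claimed.

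With all five hypotheses verified, the Gumbel clause of Theorem~\ref{thm:transfer} gives $\E U_j^N/a_{N,j}\to1$, which is the assertion. The only potentially delicate point is the tail control, and it is already handled by Lemma~\ref{lem:equal-tails}. As an independent sanity check, one can also argue from Corollary~\ref{cor:harmonic}. There $\E U_j^N=h_{j-1}(1,\tfrac12,\ldots,\tfrac1N)$ is a polynomial in $H_N,H_N^{(2)},\ldots$ whose leading term is $H_N^{j-1}/(j-1)!$. Since $H_N\sim\log N$ and $H_N^{(m)}=O(1)$ for $m\ge2$, this again gives the leading asymptotic.
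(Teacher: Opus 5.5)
Your proposal is correct and matches the paper's proof: apply Theorem~\ref{thm:transfer} with $A_{N,j}=a_{N,j}$ and $\Lambda(x)=h_j(x)=\e^{-x}$, using the preceding computations of $M_N$, $S_N$ and $H_{N,j}$, and Lemma~\ref{lem:equal-tails} for the tail condition (v). Your harmonic-polynomial check via Corollary~\ref{cor:harmonic} is also valid on its own, and more elementary: in the power-sum expansion of $h_{j-1}(1,\tfrac12,\ldots,\tfrac1N)$, $H_N^{j-1}/(j-1)!$ is the only term of top degree in $H_N$, and every other term is $O(H_N^{j-3})$.
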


\begin{proof}
Apply Theorem~\ref{thm:transfer} with $A_{N,j}=a_{N,j}$ and $\Lambda(x)=h_j(x)=\e^{-x}$.
\end{proof}

\subsection*{Endpoint-Laplace arrays}

We next give a modular endpoint version which contains the leading term of the decaying probability regime studied by Doumas and Papanicolaou. This subsection is independent of the finite-$N$ extremality, stochastic-monotonicity, and joint-limit results proved below. By Lemma~\ref{lem:rate-normalization}, we may work in the scale-invariant rate normalization
\[
        w_{N,k}=\frac1{f(k)},\qquad 1\le k\le N,
\]
which corresponds to the probability vector $p_{N,k}=w_{N,k}/W_N$, where $W_N=\sum_{k\le N}w_{N,k}$.

Let $f\in C^1([1,\infty))$ be positive and increasing, with $f'(x)>0$ eventually and $f(N)\to\infty$. Put
\[
        \beta_N=\frac{f'(N)}{f(N)},
        \qquad \rho_N=\log\frac1{\beta_N}=\log\frac{f(N)}{f'(N)},
\]
and assume $0<\beta_N<1$ eventually and $\rho_N\to\infty$. Finally set
\[
        r_{N,k}=\frac{f(N)}{f(k)},\qquad u_N(x)=\rho_N-\log\rho_N+x.
\]

\begin{hypothesis}[Endpoint-Laplace and tail envelopes]\label{hyp:endpoint}
Fix $j\ge2$. The following endpoint estimates hold.
\begin{enumerate}[label=(\roman*)]
\item For every fixed real $a\ge0$ and uniformly for bounded $x$,
\begin{equation}\label{eq:endpoint-i}
        \sum_{k=1}^N r_{N,k}^a\exp\{-u_N(x)r_{N,k}\}\sim\frac{\e^{\rho_N-u_N(x)}}{u_N(x)}.
\end{equation}
\item The same estimate with $u_N(x)$ replaced by $2u_N(x)$ holds for $a=0$.
\item There is a constant $C>0$ such that, for all sufficiently large $N$ and all $x\ge0$,
\begin{equation}\label{eq:endpoint-right-tail}
        \sum_{k=1}^N r_{N,k}^j\e^{-u_N(x)r_{N,k}}
        \le C\left(\frac{\e^{\rho_N-u_N(x)}}{u_N(x)}+\e^{-u_N(x)}\right).
\end{equation}
\item The left endpoint has a negligible global envelope:
\begin{equation}\label{eq:endpoint-left-tail}
\begin{aligned}
        \lim_{L\to\infty}\limsup_{N\to\infty}\int_{\{x<-L: u_N(x)>0\}}
        &\left(\frac{u_N(x)}{\rho_N}\right)^{j-1}\sum_{k=1}^N r_{N,k}^j\e^{-u_N(x)r_{N,k}}\\
        &\times \exp\left\{-\sum_{\ell=1}^N\e^{-u_N(x)r_{N,\ell}}+\e^{-u_N(x)}\right\}\dd x=0.
\end{aligned}
\end{equation}
\end{enumerate}
\end{hypothesis}

\begin{remark}
The compact asymptotic conditions in Hypothesis~\ref{hyp:endpoint} are the usual one-sided endpoint Laplace estimates. The right-tail bound \eqref{eq:endpoint-right-tail} and the left-tail envelope \eqref{eq:endpoint-left-tail} are used only to justify tail negligibility in the transfer theorem. The additive $\e^{-u_N(x)}$ term accounts for the endpoint summand $k=N$ when $x$ is very large. In standard smooth endpoint classes they follow from the same monotone endpoint-Laplace comparison that gives \eqref{eq:endpoint-i}; the contributing window has width of order $(u_N\beta_N)^{-1}$ below $N$, while the far-left product term gives exponential damping.
\end{remark}

\begin{lemma}[Endpoint tail negligibility]\label{lem:endpoint-tail}
Assume Hypothesis~\ref{hyp:endpoint}. With
\[
        t_N(x)=f(N)u_N(x),\qquad C_N=f(N),\qquad A_{N,j}=\frac{\rho_N^{j-1}}{(j-1)!},
\]
the normalized tail condition in Theorem~\ref{thm:transfer} holds.
\end{lemma}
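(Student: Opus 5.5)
The plan is to rewrite the normalized tail integrand of Theorem~\ref{thm:transfer}, condition (v), in the endpoint variables $r_{N,k}$ and $u_N(x)$, and then handle the right tail $x>L$ with Hypothesis~\ref{hyp:endpoint}(iii) and the left tail $x<-L$ with Hypothesis~\ref{hyp:endpoint}(iv).

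\emph{Rewriting the integrand.} With rates $p_{N,k}=w_{N,k}=1/f(k)$ and $t_N(x)=f(N)u_N(x)$ we have
\[
p_{N,k}t_N(x)=r_{N,k}u_N(x),\qquad C_Np_{N,k}=r_{N,k}.
\]
This rescaling is legitimate by Lemma~\ref{lem:rate-normalization}. After dividing by $A_{N,j}=\rho_N^{j-1}/(j-1)!$, the $(j-1)!$ cancels and the integrand becomes
\[
\left(\frac{u_N(x)}{\rho_N}\right)^{j-1}\sum_{k=1}^N r_{N,k}^j\e^{-u_N(x)r_{N,k}}\prod_{\ell\ne k}\left(1-\e^{-u_N(x)r_{N,\ell}}\right).
\]
The region $\{t_N(x)>0\}$ is exactly $\{u_N(x)>0\}$.

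\emph{Left tail.} Use $1-y\le \e^{-y}$ to bound the product by
\[
\exp\Bigl\{-\sum_{\ell}\e^{-u_Nr_{N,\ell}}+\e^{-u_Nr_{N,k}}\Bigr\}.
\]
Since $f$ is increasing, $r_{N,k}\ge1$, so $\e^{-u_Nr_{N,k}}\le \e^{-u_N}$ whenever $u_N>0$. This makes the bound uniform in $k$, and the resulting majorant is exactly the integrand in \eqref{eq:endpoint-left-tail}. Hypothesis~\ref{hyp:endpoint}(iv) then gives negligibility of the part over $x<-L$ directly.

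\emph{Right tail.} For $x>L\ge0$, bound the product by $1$. Since $\rho_N\to\infty$, for large $N$ we have $\log\rho_N\ge0$ and $\rho_N\ge1$, so $u_N/\rho_N\le 1+x/\rho_N\le 1+x$. By \eqref{eq:endpoint-right-tail}, the sum is at most
\[
C\left(\frac{\e^{\rho_N-u_N}}{u_N}+\e^{-u_N}\right).
\]
Here $\e^{\rho_N-u_N}=\rho_N\e^{-x}$, and $u_N\ge\rho_N-\log\rho_N\ge\rho_N/2$ for large $N$, so the first term is at most $2\e^{-x}$. Also $\e^{-u_N}=\rho_N\e^{-\rho_N}\e^{-x}\le \e^{-x}$. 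So the integrand is dominated, uniformly in large $N$, by $3C(1+x)^{j-1}\e^{-x}$. The integral of this over $x>L$ tends to $0$ as $L\to\infty$. Combining the two tails yields condition (v) of Theorem~\ref{thm:transfer}.

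The only delicate point is the left tail: a crude bound on the product would lose the damping. It works only because the single missing factor $\ell=k$ in $P_{N,k}$ is absorbed by the $+\e^{-u_N}$ correction, using $r_{N,k}\ge1$. I would check that step carefully; the rest is bookkeeping with the hypothesis as stated.
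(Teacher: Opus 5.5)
Your proposal is correct and follows the paper's proof. You bound the product by one and use Hypothesis~\ref{hyp:endpoint}(iii) to dominate the right tail by a multiple of $(1+x)^{j-1}\e^{-x}$, and you reduce the left tail to \eqref{eq:endpoint-left-tail} via $1-y\le\e^{-y}$, with $r_{N,k}\ge1$ absorbing the omitted factor. The only cosmetic difference is that you bound $1/u_N\le 2/\rho_N$ where the paper instead cancels it against one power of $u_N/\rho_N$; both routes give the same envelope.
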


\begin{proof}
In the rate normalization $w_{N,k}=1/f(k)$, the normalized transfer integrand is
\[
        I_N(x)=\frac1{A_{N,j}}\sum_{k=1}^N r_{N,k}\e^{-u_N(x)r_{N,k}}\frac{(u_N(x)r_{N,k})^{j-1}}{(j-1)!}
        \prod_{\ell\ne k}(1-\e^{-u_N(x)r_{N,\ell}}),\qquad u_N(x)>0.
\]
Since the product is at most one, \eqref{eq:endpoint-right-tail} gives
\[
        I_N(x)\le\frac{H_{N,j}(x)}{A_{N,j}}
        \le C\e^{-x}\left(\frac{u_N(x)}{\rho_N}\right)^{j-2}
        +C\beta_N\rho_N\e^{-x}\left(\frac{u_N(x)}{\rho_N}\right)^{j-1}.
\]
For $x\ge0$, $u_N(x)/\rho_N\le1+x$ for all large $N$, and $\beta_N\rho_N\le1$ eventually. Hence
\[
        I_N(x)\le C_j(1+x)^{j-1}\e^{-x},\qquad x\ge0,
\]
with the interpretation $(1+x)^0=1$ when it occurs. This envelope has a vanishing right tail.

For the left tail, use
\[
        \prod_{\ell\ne k}(1-\e^{-u_N(x)r_{N,\ell}})
        \le \exp\left\{-\sum_{\ell\ne k}\e^{-u_N(x)r_{N,\ell}}\right\}.
\]
Because $r_{N,k}\ge1$, the omitted term is at most $\e^{-u_N(x)}$. Therefore the normalized left-tail integral is bounded above by the expression in \eqref{eq:endpoint-left-tail}, which tends to zero as $L\to\infty$. The two tail bounds prove the normalized tail condition.
\end{proof}

\begin{theorem}[Endpoint leading term]\label{thm:endpoint}
Assume Hypothesis~\ref{hyp:endpoint}. Then for every fixed $j\ge2$,
\[
        \E U_j^N\sim \frac{\rho_N^{j-1}}{(j-1)!}
        =\frac1{(j-1)!}\left(\log\frac{f(N)}{f'(N)}\right)^{j-1}.
\]
\end{theorem}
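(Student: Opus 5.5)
We apply Theorem~\ref{thm:transfer} to the rates $w_{N,k}=1/f(k)$; Lemma~\ref{lem:rate-normalization} allows this. We take
\[
        t_N(x)=f(N)u_N(x),\qquad B_N=f(N)(\rho_N-\log\rho_N),\qquad C_N=f(N),\qquad A_{N,j}=\frac{\rho_N^{j-1}}{(j-1)!}.
\]
Since $B_N/C_N=\rho_N-\log\rho_N\to\infty$, the standing assumption holds. Condition (v) is exactly Lemma~\ref{lem:endpoint-tail}. So we only need to check (i)--(iv), with the Gumbel limits $\Lambda(x)=h_j(x)=\e^{-x}$. The final clause of Theorem~\ref{thm:transfer} then gives $\E U_j^N\sim A_{N,j}$, which is the claim.

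\textbf{Step 1 (first-defect mass).} Note that $p_{N,k}t_N(x)=u_N(x)r_{N,k}$. Hence
\[
        M_N(x)=\sum_{k=1}^N\e^{-u_N(x)r_{N,k}}.
\]
Apply \eqref{eq:endpoint-i} with $a=0$. Since $\e^{\rho_N-u_N(x)}=\rho_N\e^{-x}$,
\[
        M_N(x)\sim\frac{\rho_N\e^{-x}}{u_N(x)}.
\]
Also $u_N(x)/\rho_N=1-(\log\rho_N-x)/\rho_N\to1$ uniformly for bounded $x$. Therefore $M_N\to\e^{-x}$ uniformly on compacts, and (i) holds with $\Lambda(x)=\e^{-x}$.

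\textbf{Step 2 (atomlessness).} Hypothesis~\ref{hyp:endpoint}(ii) gives
\[
        S_N(x)\sim\frac{\e^{\rho_N-2u_N(x)}}{2u_N(x)}=\frac{\rho_N^2\e^{-2x}}{2u_N(x)}\,\e^{-\rho_N}.
\]
The right side is of order $\rho_N\e^{-\rho_N}\to0$ uniformly on compacts, so (ii) holds.

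\textbf{Step 3 (marked density).} We compute
\[
        H_{N,j}(x)=f(N)\sum_k\frac{1}{f(k)}\e^{-u_Nr_{N,k}}\frac{(u_Nr_{N,k})^{j-1}}{(j-1)!}
        =\frac{u_N^{j-1}}{(j-1)!}\sum_k r_{N,k}\e^{-u_Nr_{N,k}}.
\]
The second equality uses $f(N)/f(k)=r_{N,k}$. Now apply \eqref{eq:endpoint-i} with $a=1$:
\[
        H_{N,j}(x)\sim\frac{u_N^{j-2}\rho_N\e^{-x}}{(j-1)!}.
\]
Dividing by $A_{N,j}$ gives $(u_N/\rho_N)^{j-2}\e^{-x}\to\e^{-x}$ uniformly on compacts. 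So (iii) holds with $h_j(x)=\e^{-x}$.

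\textbf{Step 4 (finiteness).} Condition (iv) holds because $\int\e^{-x}\e^{-\e^{-x}}\dd x=1$.

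The one delicate point is making the uniformity in Steps 1 and 3 explicit. The relative error in \eqref{eq:endpoint-i} is uniform for bounded $x$ by hypothesis. The factor $u_N(x)/\rho_N$ tends to $1$ uniformly on compacts because $\log\rho_N/\rho_N\to0$. Products of uniformly convergent factors with bounded limits on $K$ converge uniformly, so the uniform convergence in (i) and (iii) follows. Nothing beyond this bookkeeping is needed, since all tail control has already been packaged into Lemma~\ref{lem:endpoint-tail}.
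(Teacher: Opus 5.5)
Your route is the paper's: apply Theorem~\ref{thm:transfer} to the rates $1/f(k)$ with $t_N(x)=f(N)u_N(x)$, $C_N=f(N)$ and $A_{N,j}=\rho_N^{j-1}/(j-1)!$. You then read off $\Lambda=h_j=\e^{-x}$ from Hypothesis~\ref{hyp:endpoint}(i)--(ii) and cite Lemma~\ref{lem:endpoint-tail} for condition (v). Your explicit checks of $B_N/C_N\to\infty$ and of the size of $S_N$ are correct.

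The second equality in your Step 3 is wrong. Since $f(N)/f(k)=r_{N,k}$ and $(u_Nr_{N,k})^{j-1}=u_N^{j-1}r_{N,k}^{j-1}$, the correct identity is
\[
H_{N,j}(x)=\frac{u_N(x)^{j-1}}{(j-1)!}\sum_{k=1}^N r_{N,k}^{\,j}\,\e^{-u_N(x)r_{N,k}} .
\]
You accounted for the prefactor $f(N)/f(k)$ but dropped the factor $r_{N,k}^{j-1}$ coming from inside $(u_Nr_{N,k})^{j-1}$.

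The repair is to invoke \eqref{eq:endpoint-i} with $a=j$ instead of $a=1$. This is permitted because Hypothesis~\ref{hyp:endpoint}(i) is assumed for every fixed real $a\ge0$. It gives the same asymptotic $u_N^{j-2}\rho_N\e^{-x}/(j-1)!$, so your verification of (iii) with $h_j(x)=\e^{-x}$ stands. The exponent $j$ is also why Hypothesis~\ref{hyp:endpoint}(iii) and the envelope in Lemma~\ref{lem:endpoint-tail} are written with $r_{N,k}^j$. The corrected formula is therefore the one consistent with the lemma you cite for (v).

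With this correction your proof coincides with the paper's.
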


\begin{proof}
Apply the transfer theorem in the positive-rate normalization $p_{N,k}=w_{N,k}=1/f(k)$, which is equivalent to the normalized coupon probabilities by Lemma~\ref{lem:rate-normalization}. Use the scaling from Lemma~\ref{lem:endpoint-tail}. The first-defect mass is
\[
        M_N(x)=\sum_{k=1}^N\e^{-u_N(x)r_{N,k}}.
\]
By Hypothesis~\ref{hyp:endpoint} with $a=0$,
\[
        M_N(x)\sim\frac{\e^{\rho_N-u_N(x)}}{u_N(x)}.
\]
Since $u_N(x)=\rho_N-\log\rho_N+x$,
\[
        \e^{\rho_N-u_N(x)}=\rho_N\e^{-x},\qquad u_N(x)\sim\rho_N,
\]
and therefore $M_N(x)\to\e^{-x}$ uniformly on compact $x$-sets. The estimate with $2u_N(x)$ gives
\[
        S_N(x)=\sum_{k=1}^N\e^{-2u_N(x)r_{N,k}}\to0
\]
uniformly on compact sets.

The marked density equals
\[
\begin{aligned}
        H_{N,j}(x)
        &=f(N)\sum_{k=1}^N\frac1{f(k)}\e^{-u_N(x)r_{N,k}}\frac{(u_N(x)r_{N,k})^{j-1}}{(j-1)!}\\
        &=\frac{u_N(x)^{j-1}}{(j-1)!}\sum_{k=1}^N r_{N,k}^j\e^{-u_N(x)r_{N,k}}.
\end{aligned}
\]
By Hypothesis~\ref{hyp:endpoint} with $a=j$,
\[
        H_{N,j}(x)\sim \frac{u_N(x)^{j-1}}{(j-1)!}\frac{\e^{\rho_N-u_N(x)}}{u_N(x)}
        \sim\frac{\rho_N^{j-1}}{(j-1)!}\e^{-x}
\]
uniformly on compact sets. Lemma~\ref{lem:endpoint-tail} verifies the tail condition, and
\[
        \int_{-\infty}^{\infty}\e^{-x}\e^{-\e^{-x}}\dd x=1.
\]
The result follows from Theorem~\ref{thm:transfer}.
\end{proof}

The endpoint hypotheses are verified for the two standard decaying arrays in Proposition~\ref{prop:standard-endpoint} below. Thus the corresponding applications of Theorem~\ref{thm:endpoint} are unconditional.

\begin{corollary}[Generalized Zipf law]\label{cor:zipf}
Let $\alpha>0$ and
\[
        p_{N,k}=\frac{k^{-\alpha}}{\sum_{\ell=1}^N\ell^{-\alpha}},\qquad 1\le k\le N.
\]
Then, for every fixed $j\ge2$,
\[
        \E U_j^N\sim\frac{(\log N)^{j-1}}{(j-1)!}.
\]
\end{corollary}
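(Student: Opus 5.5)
The Zipf array is an instance of the endpoint regime, so I would derive the statement from Theorem~\ref{thm:endpoint}. The text indicates that Proposition~\ref{prop:standard-endpoint} verifies Hypothesis~\ref{hyp:endpoint} for the standard decaying arrays; I will assume Zipf is one of them, and also say how to check the hypothesis directly if needed. By Lemma~\ref{lem:rate-normalization} the normalizing constant $\sum_\ell \ell^{-\alpha}$ does not matter. So I work with the rates $w_{N,k}=1/f(k)$, where $f(x)=x^\alpha$. This $f$ is $C^1$, positive and increasing, with $f'>0$ and $f(N)\to\infty$.

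The identification of the leading term is a direct computation. We have $\beta_N=f'(N)/f(N)=\alpha/N$, so $0<\beta_N<1$ once $N>\alpha$. Also
\[
\rho_N=\log\frac{N}{\alpha}=\log N-\log\alpha\to\infty .
\]
Theorem~\ref{thm:endpoint} then gives
\[
\E U_j^N\sim\frac{(\log N-\log\alpha)^{j-1}}{(j-1)!}.
\]
Since $\rho_N/\log N\to1$, this is asymptotic to $(\log N)^{j-1}/(j-1)!$, which is the claim.

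If the hypothesis has to be verified by hand, the main work is condition (i) of Hypothesis~\ref{hyp:endpoint}. Here $r_{N,k}=(N/k)^\alpha$. I would set $k=N(1-s/N)$, so that $r_{N,k}\approx 1+\alpha s/N$ near the endpoint. For $u=u_N(x)\sim\rho_N$, the sum $\sum_k r^a e^{-ur}$ becomes
\[
e^{-u}\sum_{m\ge0}e^{-u\alpha m/N}(1+o(1)).
\]
The factor $r^a$ is $1+o(1)$ on the window $m=O(N/u)$, because $u\to\infty$. The geometric sum is
\[
\sim\frac{N}{\alpha u}=\frac{e^{\rho_N}}{u},
\]
which is exactly \eqref{eq:endpoint-i}. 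Indices with $k\le N/2$, where $r\ge 2^\alpha$, contribute at most
\[
N\,\max_{r\ge 2^\alpha} r^a e^{-ur}=O\bigl(N e^{-2^\alpha u}\,u^{a}\bigr).
\]
This is $o(e^{\rho_N-u}/u)$ because $e^{-(2^\alpha-1)u}$ beats every power of $u$. Condition (ii) follows the same way with $u$ replaced by $2u$.

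For the right-tail bound (iii) with $x\ge0$, I would bound the sum by an integral comparison, using monotonicity of $r^j e^{-ur}$ once $ur>j$. The $k=N$ term supplies the additive $e^{-u}$. The left envelope (iv) is expected to be the main obstacle. For $x<-L$ we have
\[
M_N(x)\approx \rho_N e^{-x}\cdot\frac{\rho_N}{u_N(x)}\ \ge\ e^{-x},
\]
so the factor $\exp\{-M_N+e^{-u}\}$ is at most $C\exp(-c\,e^{L})$. Meanwhile the marked sum times $(u/\rho_N)^{j-1}$ is at most polynomially large in $e^{-x}$. For very small $u$, where the endpoint asymptotics fail, I would use instead the crude bound $M_N\ge N e^{-u}$ together with the fact that $(u/\rho_N)^{j-1}$ is small. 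The double-exponential damping should make the integral over $x<-L$ vanish as $L\to\infty$.
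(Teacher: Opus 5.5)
Your argument is correct and matches the paper's proof: apply Theorem~\ref{thm:endpoint} with $f(x)=x^\alpha$, take Hypothesis~\ref{hyp:endpoint} from Proposition~\ref{prop:standard-endpoint} (which does cover the power family), and note that $\rho_N=\log(N/\alpha)=\log N+O(1)$. Your optional hand-verification is not needed, but its small-$u$ step has the inequality backwards: since $r_{N,k}\ge1$, you only get $M_N\le N\e^{-u}$, which is why the paper's appendix instead bounds $M_N$ from below by $\e^{-1}$ times the number of indices with $r_{N,k}\le 1/u$.
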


\begin{proof}
Apply Theorem~\ref{thm:endpoint} and Proposition~\ref{prop:standard-endpoint} with $f(x)=x^\alpha$. Then
\[
        \rho_N=\log\frac{f(N)}{f'(N)}=\log\frac N\alpha=\log N+O(1),
\]
which gives the displayed asymptotic.
\end{proof}

\begin{corollary}[Stretched exponential decay]\label{cor:stretch}
Let $\lambda>0$, $0<q<1$, and
\[
        p_{N,k}=\frac{\e^{-\lambda k^q}}{\sum_{\ell=1}^N\e^{-\lambda\ell^q}},\qquad 1\le k\le N.
\]
Then, for every fixed $j\ge2$,
\[
        \E U_j^N\sim\frac{(1-q)^{j-1}(\log N)^{j-1}}{(j-1)!}.
\]
\end{corollary}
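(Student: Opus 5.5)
This follows the pattern of Corollary~\ref{cor:zipf}: we apply Theorem~\ref{thm:endpoint} together with Proposition~\ref{prop:standard-endpoint}, which supplies Hypothesis~\ref{hyp:endpoint} for the standard decaying arrays, now with the choice $f(x)=\e^{\lambda x^q}$. By Lemma~\ref{lem:rate-normalization} the normalizing denominator $\sum_{\ell\le N}\e^{-\lambda\ell^q}$ plays no role. Only the rates $w_{N,k}=1/f(k)=\e^{-\lambda k^q}$ matter, and these match the prescribed probability vector after normalization. We also check the standing assumptions on $f$. It is $C^1$, positive and increasing, with $f'(x)=\lambda q x^{q-1}\e^{\lambda x^q}>0$ and $f(N)\to\infty$.

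The one computation is the endpoint scale. We have
\[
        \beta_N=\frac{f'(N)}{f(N)}=\lambda q N^{q-1},
\]
which tends to $0$ because $q<1$, so $0<\beta_N<1$ eventually. Hence
\[
        \rho_N=\log\frac1{\beta_N}=(1-q)\log N-\log(\lambda q),
\]
and $\rho_N\to\infty$. Consequently $\rho_N=(1-q)\log N+O(1)$, which gives $\rho_N^{j-1}\sim(1-q)^{j-1}(\log N)^{j-1}$ for each fixed $j$. Substituting this into Theorem~\ref{thm:endpoint} yields the displayed asymptotic.

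The only substantive point is confirming that the stretched-exponential case really lies within the scope of Proposition~\ref{prop:standard-endpoint}. The other standard array, Zipf, was used for Corollary~\ref{cor:zipf}, so this is presumably the second one. If a direct check were needed, the main obstacle would be the endpoint-Laplace estimate \eqref{eq:endpoint-i}. I would attack it by linearizing near $k=N$: write $\log r_{N,k}=\lambda(N^q-k^q)\approx\beta_N(N-k)$, valid on the relevant window $N-k=O((u_N\beta_N)^{-1})$, where $(u_N\beta_N)^{-1}=o(N)$. On that window the sum becomes
\[
        \sum_{m\ge0}\exp\{-u_N\e^{\beta_N m}\}\approx\beta_N^{-1}\int_{u_N}^\infty \e^{-y}y^{-1}\dd y\sim\frac{\e^{-u_N}}{\beta_N u_N}=\frac{\e^{\rho_N-u_N}}{u_N},
\]
and the factor $r_{N,k}^a=1+o(1)$ on the window. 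Far from the endpoint, doubly exponential decay makes the contribution negligible. The tail envelopes (iii) and (iv) would follow from the same comparison, using monotonicity in $k$.
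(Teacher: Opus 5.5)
Your proposal is correct and matches the paper's proof: apply Theorem~\ref{thm:endpoint} together with Proposition~\ref{prop:standard-endpoint} for $f(x)=\e^{\lambda x^q}$ (this is indeed the second array covered there), compute $\beta_N=\lambda qN^{q-1}$ and $\rho_N=(1-q)\log N-\log(\lambda q)$, and conclude. Your optional sketch of a direct endpoint-Laplace check follows the same window-linearization argument as the appendix, but it is not needed here.
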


\begin{proof}
Apply Theorem~\ref{thm:endpoint} and Proposition~\ref{prop:standard-endpoint} with $f(x)=\e^{\lambda x^q}$. Since
\[
        \frac{f'(N)}{f(N)}=\lambda qN^{q-1},
        \qquad \rho_N=(1-q)\log N-\log(\lambda q),
\]
the result follows.
\end{proof}

\section{\texorpdfstring{Finite-$N$ extremality}{Finite-N extremality}}

We now prove the finite-$N$ maximum conjecture. The proof is exact and does not use asymptotics.

\begin{theorem}[Strict radial maximum at uniform]\label{thm:radial-max}
Fix $N\ge2$ and $j\ge2$. Let
\[
        \Phi_j(p)=\E_pU_j^N,
        \qquad u=(1/N,\ldots,1/N).
\]
For every nonuniform probability vector $p$, the map
\[
        \theta\mapsto \Phi_j(u+\theta(p-u))
\]
is strictly decreasing on $(0,1]$. Consequently,
\[
        \Phi_j(p)<\Phi_j(u)
\]
for every nonuniform $p$.
\end{theorem}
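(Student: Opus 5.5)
The plan is to work with the exact representations already available (Proposition~\ref{prop:poissonized} and Proposition~\ref{prop:subset}, together with the scale invariance of Lemma~\ref{lem:rate-normalization}) and differentiate along the ray. Put $q=q(\theta)=u+\theta(p-u)$, which stays in the positive simplex for $\theta\in(0,1]$, and set $d=p-u$, so that $\sum_a d_a=0$ and $d_a-d_b=(q_a-q_b)/\theta$. Extend $\Phi_j$ to all positive rate vectors by $I_j$; this extension is smooth, and the subset formula shows it is a rational function without poles on the positive orthant. The chain rule gives
\[
\frac{d}{d\theta}\Phi_j(q(\theta))=\sum_a d_a\,\partial_a\Phi_j(q)=\frac1N\sum_{a<b}(d_a-d_b)\bigl(\partial_a\Phi_j(q)-\partial_b\Phi_j(q)\bigr),
\]
where the last identity uses $\sum_a d_a=0$. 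So the whole problem reduces to a two-coordinate statement. We must show that, for every positive rate vector $q$ and every pair $a\ne b$,
\begin{equation*}
\partial_a\Phi_j(q)-\partial_b\Phi_j(q)=-(q_a^{j-1}-q_b^{j-1})\,\widetilde K_{ab}^{(j)}(q),\qquad \widetilde K_{ab}^{(j)}(q)>0.
\end{equation*}
Given this, the derivative equals $-\frac1{N\theta}\sum_{a<b}(q_a-q_b)(q_a^{j-1}-q_b^{j-1})\widetilde K_{ab}^{(j)}(q)$. Every summand is nonnegative. Because $q$ is nonuniform for $\theta>0$, some pair has $q_a\ne q_b$, and that pair gives a strictly positive summand. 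Strict decrease on $(0,1]$ follows, and comparing $\theta=1$ with $\theta\to0^+$ by continuity gives $\Phi_j(p)<\Phi_j(u)$. Writing $q_a-q_b=\theta(p_a-p_b)$ recovers the factorization displayed in the summary, with $K_{ab}^{(j)}\propto\widetilde K_{ab}^{(j)}$ up to positive factors.

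For the pair identity I would use the integral form. Isolate the two coordinates by writing $F(t)=\prod_{i\ne a,b}(1-\e^{-q_it})$, which lies in $[0,1)$ and is nondecreasing in $t$. Also write $g_j(x,t)=x\e^{-xt}(xt)^{j-1}/(j-1)!$. Then
\[
\Phi_j=\int_0^\infty\Bigl[g_j(q_a,t)(1-\e^{-q_bt})+g_j(q_b,t)(1-\e^{-q_at})\Bigr]F(t)\dd t+R,
\]
where $R$ collects the terms with $k\ne a,b$; these depend on $(q_a,q_b)$ only through the factor $(1-\e^{-q_at})(1-\e^{-q_bt})$. Take $\partial_a-\partial_b$ of everything. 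The symmetric factor in $R$ contributes terms proportional to $t(\e^{-q_at}-\e^{-q_bt})$. The bracket contributes derivatives of $g_j$, and $\partial_x g_j(x,t)=g_j(x,t)(j/x-t)$. I would then integrate by parts in $t$, exactly once, to move every $t$-multiplication onto $F$ or onto the exponentials. The aim is to collect the result as $(q_a^{j-1}-q_b^{j-1})$ times an integral of a manifestly positive quantity, built from $F$, $F'\ge0$, and products like $\e^{-(q_a+q_b)t}t^{j-1}$.

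As a cross-check, and as an alternative route if the integral bookkeeping becomes messy, I would run the same computation on the subset formula. Grouping subsets $B$ according to $B\cap\{a,b\}$ reduces $\partial_a-\partial_b$ to sums over $C\subseteq[N]\setminus\{a,b\}$ of explicit rational functions of $q_a,q_b,q_C$. The alternating sign over $C$ should then be reabsorbed into a Laplace integral against $F$, via $\sum_C(-1)^{|C|}\e^{-q_Ct}=F(t)$.

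The main obstacle is exactly this positivity of the pair kernel $\widetilde K_{ab}^{(j)}$. The raw expression for $\partial_a\Phi_j-\partial_b\Phi_j$ is a difference of terms of both signs, and positivity must come from the monotonicity of $F$ and from an inequality of the form $(x^{j-1}-y^{j-1})$ times a symmetric positive function. I would first settle the case $N=2$ ($F\equiv1$) by direct computation, where $\Phi_j(x,y)$ is explicit. That case should reveal the correct integration by parts, and I would then check that the only new contribution for general $N$ enters through $F'\ge0$ with the right sign.
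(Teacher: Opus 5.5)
Your reduction
\[
\frac{\dd}{\dd\theta}\Phi_j(q(\theta))=\frac1{N\theta}\sum_{a<b}(q_a-q_b)\bigl(\partial_a\Phi_j-\partial_b\Phi_j\bigr)(q)
\]
is a correct identity. The gap is the pair lemma you then need, which is false for $N\ge3$. Requiring $\partial_a\Phi_j-\partial_b\Phi_j=-(q_a^{j-1}-q_b^{j-1})\widetilde K_{ab}^{(j)}$ with $\widetilde K_{ab}^{(j)}>0$ at every positive $q$ and every pair would make $\Phi_j$ Schur-concave, by the Schur--Ostrowski criterion. It is not.

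Take $N=3$ and $j=2$. There
\[
\Phi_2(x,y,z)=3-\frac{x^2+y^2}{(x+y)^2}-\frac{x^2+z^2}{(x+z)^2}-\frac{y^2+z^2}{(y+z)^2}+\frac{x^2+y^2+z^2}{(x+y+z)^2},
\]
and a direct computation gives
\[
\tfrac12(\partial_x-\partial_y)\Phi_2=-\frac{x-y}{(x+y)^2}+\frac{x-y}{(x+y+z)^2}-\frac{z(x-z)}{(x+z)^3}+\frac{z(y-z)}{(y+z)^3}=z(x-y)\Bigl[\frac{x+y}{x^2y^2}-\frac{2}{(x+y)^3}\Bigr]+O(z^2).
\]
The bracket is positive because $x^2y^2\le(x+y)^4/16$. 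So for any $x>y$ and small $z$, moving mass from $y$ to $x$ increases $\Phi_2$. Concretely, at $p=(0.5,0.45,0.05)$ you get $(\partial_1-\partial_2)\Phi_2\approx0.039>0$ although $p_1>p_2$. Such a $p$ is itself an admissible endpoint of a ray, so your argument needs the lemma there, and no integration by parts or subset regrouping can produce it.

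Your $N=2$ test would not detect this, because there $\widetilde K=j/(q_a+q_b)^j$ works. It is precisely the $F'\ge0$ contribution that carries the wrong sign once a third rate is small. At such points the radial derivative is negative only because the pairs involving the small coordinate dominate. The sign is a cancellation across pairs in the gradient split, not a pairwise fact.

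The missing idea is to use the radial direction inside each subset term, not only through $d_a-d_b=(q_a-q_b)/\theta$. Note that the latter is all a Schur-type argument ever uses. In the subset formula, for $|B|=r$ and $\alpha_i=p_i/p_B$, the ray from $u$ gives
\[
\frac{\dd\alpha_i}{\dd\theta}=\frac{r\alpha_i-1}{N\theta p_B}.
\]
Thus the normalized coordinates in $B$ move radially away from the uniform point of $B$, and Chebyshev's pairwise identity gives
\[
\frac{\dd}{\dd\theta}\sum_{i\in B}\alpha_i^j=\frac{j}{N\theta p_B}\sum_{a<b\in B}(\alpha_a-\alpha_b)(\alpha_a^{j-1}-\alpha_b^{j-1}).
\]
Regrouping by $B=\{a,b\}\cup R$ yields the kernel
\[
K_{ab}^{(j)}=\sum_R(-1)^{|R|}(p_a+p_b+p_R)^{-(j+1)}=\frac1{j!}\int_0^\infty t^j\e^{-(p_a+p_b)t}\prod_{\ell\ne a,b}(1-\e^{-p_\ell t})\dd t>0.
\]
This is a different pairwise split of the same total derivative. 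Its pair terms are not $(q_a-q_b)(\partial_a\Phi_j-\partial_b\Phi_j)$, which is why each is sign-definite even though the gradient differences are not.
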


\begin{proof}
Fix a nonuniform $p$ and put
\[
        h_i=p_i-\frac1N,
        \qquad p_i(\theta)=\frac1N+\theta h_i,
        \qquad 0\le\theta\le1.
\]
For $0<\theta\le1$, write $p_i=p_i(\theta)$ to lighten notation. For a nonempty subset $B\subseteq[N]$, let
\[
        p_B=\sum_{i\in B}p_i,
        \qquad \alpha_i=\frac{p_i}{p_B}\quad(i\in B).
\]
By Proposition~\ref{prop:subset},
\[
        \Phi_j(p(\theta))=\sum_{\varnothing\ne B\subseteq[N]}(-1)^{|B|-1}\sum_{i\in B}\alpha_i^j.
\]
Let $r=|B|$ and $h_B=\sum_{i\in B}h_i$. Since
\[
        \alpha_i'=\frac{h_i-\alpha_ih_B}{p_B},
\]
and since
\[
        h_i=\frac{p_i-1/N}{\theta},
        \qquad h_B=\frac{p_B-r/N}{\theta},
\]
we have
\[
        h_i-\alpha_ih_B=\frac{r\alpha_i-1}{N\theta}.
\]
Therefore
\[
\begin{aligned}
        \frac{\dd}{\dd\theta}\sum_{i\in B}\alpha_i^j
        &=\frac{j}{N\theta p_B}\sum_{i\in B}\alpha_i^{j-1}(r\alpha_i-1)\\
        &=\frac{j}{N\theta p_B}\sum_{a<b,\ a,b\in B}(\alpha_a-\alpha_b)(\alpha_a^{j-1}-\alpha_b^{j-1}).
\end{aligned}
\]
The last identity is the standard pairwise identity
\[
        \sum_{a<b}(x_a-x_b)(y_a-y_b)
        =r\sum_i x_iy_i-\left(\sum_i x_i\right)\left(\sum_i y_i\right)
\]
with $x_i=\alpha_i$ and $y_i=\alpha_i^{j-1}$.

Now group the derivative by unordered pairs $\{a,b\}$. If $B=\{a,b\}\cup R$ with $R\subseteq[N]\setminus\{a,b\}$, then
\[
        \frac1{p_B}(\alpha_a-\alpha_b)(\alpha_a^{j-1}-\alpha_b^{j-1})
        =\frac{(p_a-p_b)(p_a^{j-1}-p_b^{j-1})}{p_B^{j+1}}.
\]
Since $|B|-1=|R|+1$, we obtain
\begin{equation}\label{eq:radial-deriv}
        \frac{\dd}{\dd\theta}\Phi_j(p(\theta))
        =-\frac{j}{N\theta}\sum_{1\le a<b\le N}(p_a-p_b)(p_a^{j-1}-p_b^{j-1})K_{ab}^{(j)}(p),
\end{equation}
where
\[
        K_{ab}^{(j)}(p)=\sum_{R\subseteq[N]\setminus\{a,b\}}(-1)^{|R|}\frac1{(p_a+p_b+p_R)^{j+1}}.
\]
It remains to check that $K_{ab}^{(j)}(p)>0$. Using
\[
        \frac1{s^{j+1}}=\frac1{j!}\int_0^\infty t^j\e^{-st}\dd t,
\]
we get
\begin{equation}\label{eq:kernel-positive}
\begin{aligned}
        K_{ab}^{(j)}(p)
        &=\frac1{j!}\int_0^\infty t^j\e^{-(p_a+p_b)t}\sum_{R\subseteq[N]\setminus\{a,b\}}(-1)^{|R|}\e^{-p_Rt}\dd t\\
        &=\frac1{j!}\int_0^\infty t^j\e^{-(p_a+p_b)t}\prod_{\ell\ne a,b}(1-\e^{-p_\ell t})\dd t>0.
\end{aligned}
\end{equation}
Finally, the function $x\mapsto x^{j-1}$ is strictly increasing on $(0,\infty)$, so
\[
        (p_a-p_b)(p_a^{j-1}-p_b^{j-1})\ge0,
\]
with equality if and only if $p_a=p_b$. Since $p(\theta)$ is nonuniform for $\theta>0$, at least one pair contributes strictly. Equation \eqref{eq:radial-deriv} and positivity \eqref{eq:kernel-positive} imply
\[
        \frac{\dd}{\dd\theta}\Phi_j(p(\theta))<0
\]
for all $0<\theta\le1$. Hence $\Phi_j(p(\theta))$ is strictly decreasing on $(0,1]$. Since $\Phi_j$ is continuous at $\theta=0$, we obtain
\[
        \Phi_j(p)=\Phi_j(p(1))<\Phi_j(p(0))=\Phi_j(u).
\]
This proves the theorem.
\end{proof}

\begin{corollary}[Resolution of the maximum conjecture]\label{cor:max-conj}
For every $N\ge2$, every $j\ge2$, and every positive probability vector $p$,
\[
        \E_pU_j^N\le \E_uU_j^N,
\]
with equality if and only if $p=u$.
\end{corollary}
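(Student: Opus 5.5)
This is a direct consequence of Theorem~\ref{thm:radial-max}. I will treat the two cases $p=u$ and $p\ne u$ separately.

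If $p=u$, equality holds trivially. If $p$ is a positive probability vector with $p\ne u$, then $p$ is nonuniform, because the only uniform probability vector on $[N]$ is $u$.

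For nonuniform $p$, Theorem~\ref{thm:radial-max} gives directly
\[
\E_pU_j^N=\Phi_j(p)<\Phi_j(u)=\E_uU_j^N.
\]
The ray $u+\theta(p-u)$, $\theta\in[0,1]$, stays inside the positive simplex by convexity. Hence every point on it is a legitimate coupon distribution, and the subset formula of Proposition~\ref{prop:subset} used in that proof applies along the whole segment.

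Strict inequality for $p\ne u$ gives both the inequality $\E_pU_j^N\le\E_uU_j^N$ and the characterization of equality: equality forces $p=u$, and conversely $p=u$ gives equality.

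The only point that needs care is already handled in the theorem, namely continuity of $\Phi_j$ at $\theta=0$. Since $\Phi_j$ is a finite sum of rational functions of $p$ with positive denominators $p_B$, it is smooth on the open simplex, so this continuity holds. No further obstacle arises.
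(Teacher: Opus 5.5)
Your proposal is correct and is the same argument as the paper's: the paper's proof simply invokes Theorem~\ref{thm:radial-max}, which gives strict inequality for every nonuniform $p$, and notes that equality therefore occurs only at $u$. Your extra checks are accurate and already implicit in that theorem's proof: the segment from $u$ to $p$ stays in the open simplex, and $\Phi_j$ is continuous at $\theta=0$ because it is a finite sum of rational functions with positive denominators.
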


\begin{proof}
This is Theorem~\ref{thm:radial-max}, with equality only for the uniform vector.
\end{proof}

\begin{remark}[Comparison with Doumas--Spektor]\label{rem:DS-comparison}
The finite-$N$ radial expectation-extremality theorem was independently obtained by Doumas and Spektor \cite{DoumasSpektor} after the initial submission of this manuscript. Their proof starts from a separable integral representation and converts the radial derivative into a weighted covariance, whose sign follows from Chebyshev's correlation inequality. The proof above instead uses the finite alternating subset formula and yields the positive pair-kernel factorization \eqref{eq:radial-deriv}--\eqref{eq:kernel-positive}. These approaches are independent. The other results in the present paper concern monotone couplings, stochastic order in the alphabet size, joint fixed-index limits, mixed moments, and leading-asymptotic transfer; they are not consequences of the Doumas--Spektor theorem.
\end{remark}

\begin{corollary}[Strict local maximality]\label{cor:hessian}
The Hessian of $\Phi_j$ at $u$ is strictly negative definite on the tangent space of the simplex. More precisely, if $\sum_i h_i=0$, then
\[
        D^2\Phi_j(u)[h,h]
        =-\frac{j(j-1)N^3}{j!}\left[\int_0^\infty x^j\e^{-2x}(1-\e^{-x})^{N-2}\dd x\right]\sum_{i=1}^Nh_i^2.
\]
\end{corollary}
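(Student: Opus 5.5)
The plan is to read the Hessian off the radial derivative formula \eqref{eq:radial-deriv} from the proof of Theorem~\ref{thm:radial-max}, instead of differentiating the subset formula twice. Fix a tangent vector $h$ with $\sum_ih_i=0$ and $h\ne0$. Put $p(\theta)=u+\theta h$ and $g(\theta)=\Phi_j(p(\theta))$ for small $|\theta|$.

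\textbf{Smoothness and reduction to a limit.} By Proposition~\ref{prop:subset}, $\Phi_j$ is a finite sum of rational functions of $p$ whose denominators $p_B^j$ do not vanish near $u$. Hence $\Phi_j$ is $C^\infty$ near $u$ and $g''(0)=D^2\Phi_j(u)[h,h]$. The derivation of \eqref{eq:radial-deriv} only used $p_i(\theta)=1/N+\theta h_i$ with every $p_i>0$, so it holds for all small $\theta\ne0$ along the direction $h$. Its right-hand side tends to $0$ as $\theta\to0$, which forces $g'(0)=0$ (consistent with $u$ being a critical point). Therefore
\[
        g''(0)=\lim_{\theta\to0}\frac{g'(\theta)}{\theta}.
\]

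\textbf{Expanding each factor to first order in $\theta$.} I will substitute into \eqref{eq:radial-deriv} and expand as $\theta\to0$:
\begin{itemize}
\item The first factor is exact: $p_a-p_b=\theta(h_a-h_b)$.
\item The mean value theorem gives $p_a^{j-1}-p_b^{j-1}=(j-1)N^{-(j-2)}\theta(h_a-h_b)+O(\theta^2)$.
\item The kernel is continuous in $p$, so $K_{ab}^{(j)}(p(\theta))\to K_{ab}^{(j)}(u)$. This follows from the integral representation \eqref{eq:kernel-positive} by dominated convergence, since $p_a+p_b$ stays bounded away from $0$.
\end{itemize}
Combining these,
\[
        \frac{g'(\theta)}{\theta}\to-\frac{j(j-1)}{N^{j-1}}\sum_{a<b}(h_a-h_b)^2K_{ab}^{(j)}(u).
\]

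\textbf{Evaluating the limit.} At $u$ the kernel does not depend on the pair. Substituting $t=Nx$ in \eqref{eq:kernel-positive} gives
\[
        K_{ab}^{(j)}(u)=\frac{N^{j+1}}{j!}\int_0^\infty x^j\e^{-2x}(1-\e^{-x})^{N-2}\dd x.
\]
Expanding the squares and using $\sum_ih_i=0$,
\[
        \sum_{a<b}(h_a-h_b)^2=N\sum_ih_i^2-\Bigl(\sum_ih_i\Bigr)^2=N\sum_ih_i^2.
\]
Multiplying out, the powers of $N$ combine as $N^{-(j-1)}\cdot N^{j+1}\cdot N=N^3$. This yields the displayed formula. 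The integral is strictly positive, so the quadratic form is negative definite on the tangent space.

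The main obstacle is justifying the limit of $g'(\theta)/\theta$: one has to check that the $O(\theta^2)$ remainders and the continuity of the kernel are uniform over the finitely many pairs $a<b$. Since there are only finitely many pairs and every quantity involved is smooth near $u$, this is routine.
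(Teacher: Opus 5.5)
Your argument is correct. The key evaluations check out: \eqref{eq:radial-deriv} holds for any tangent direction $h$ and small $\theta\neq0$, the limit of $g'(\theta)/\theta$ is as you state, $K_{ab}^{(j)}(u)=\frac{N^{j+1}}{j!}\int_0^\infty x^j\e^{-2x}(1-\e^{-x})^{N-2}\dd x$, and the powers of $N$ combine to $N^3$.

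The paper names your route in one phrase (``differentiate \eqref{eq:radial-deriv} at $\theta=0$'') but actually carries out the other one. It expands the subset formula of Proposition~\ref{prop:subset} to second order at $u$, one subset at a time. At $u$ every $\alpha_i=1/r$ and $\sum_{i\in B}\alpha_i=1$, which gives the second variation $j(j-1)N^2r^{-j}(\sum_{i\in B}h_i^2-H_B^2/r)$. It then uses the counting identity $\sum_{|B|=r}(\sum_{i\in B}h_i^2-H_B^2/r)=\frac Nr\binom{N-2}{r-2}\sum_ih_i^2$. Finally it re-sums the alternating binomial sum $\sum_{r\ge2}(-1)^{r-1}\binom{N-2}{r-2}r^{-(j+1)}$ into the integral.

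Your version reuses the work already done in Theorem~\ref{thm:radial-max}. There, the pair factorization has already absorbed both the subset combinatorics and the conversion of the alternating sum to an integral into the kernel. Since that kernel is pair-independent at $u$, you only need the elementary identity $\sum_{a<b}(h_a-h_b)^2=N\sum_ih_i^2$. The cost is a short limiting argument: $g'(0)=0$, plus continuity of $K_{ab}^{(j)}$. The latter is actually immediate without dominated convergence, because $K_{ab}^{(j)}$ is a finite sum of rational functions with nonvanishing denominators.

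The paper's computation is purely algebraic and does not depend on the radial-derivative derivation. It also shows the Hessian constant first as an alternating subset sum, before converting it to integral form.
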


\begin{proof}
Differentiate \eqref{eq:radial-deriv} at $\theta=0$, or equivalently expand Proposition~\ref{prop:subset} to second order at $u$. For a subset $B$ of size $r$, put $H_B=\sum_{i\in B}h_i$. The normalized coordinates in $B$ satisfy
\[
        \frac{\dd}{\dd\varepsilon}\frac{u_i+\varepsilon h_i}{\sum_{\ell\in B}(u_\ell+\varepsilon h_\ell)}\bigg|_{\varepsilon=0}
        =\frac Nr\left(h_i-\frac{H_B}{r}\right).
\]
Thus the second variation of the $B$ term is
\[
        j(j-1)N^2r^{-j}\left(\sum_{i\in B}h_i^2-\frac{H_B^2}{r}\right).
\]
Summing over subsets of size $r$ gives
\[
        \sum_{|B|=r}\left(\sum_{i\in B}h_i^2-\frac{H_B^2}{r}\right)
        =\frac Nr\binom{N-2}{r-2}\sum_i h_i^2,
\]
using $\sum_i h_i=0$. Hence
\[
        D^2\Phi_j(u)[h,h]
        =j(j-1)N^3\left(\sum_i h_i^2\right)\sum_{r=2}^N(-1)^{r-1}\binom{N-2}{r-2}\frac1{r^{j+1}}.
\]
Finally,
\[
\begin{aligned}
        \sum_{r=2}^N(-1)^{r-1}\binom{N-2}{r-2}\frac1{r^{j+1}}
        &=\frac1{j!}\int_0^\infty x^j\sum_{r=2}^N(-1)^{r-1}\binom{N-2}{r-2}\e^{-rx}\dd x\\
        &=-\frac1{j!}\int_0^\infty x^j\e^{-2x}(1-\e^{-x})^{N-2}\dd x.
\end{aligned}
\]
This proves the formula and strict negativity for $h\ne0$.
\end{proof}

\section{Uniform probabilities: monotonicity and limits}

We now prove the second conjectural statement. The proof gives an explicit monotone coupling.

\begin{theorem}[Nested exponential-spacing coupling]\label{thm:coupling}
Fix $j\ge2$. Let $\xi_1,\xi_2,\ldots$ be independent random variables with
\[
        \xi_r\sim\Exp(r),
\]
and let $G_{1,j},G_{2,j},\ldots$ be iid $\GammaDist(j-1,1)$ random variables, independent of the $\xi_r$'s. Define
\[
        \widetilde U_j^N=1+\sum_{\ell=1}^{N-1}\ind{G_{\ell,j}>\xi_1+\cdots+\xi_\ell}.
\]
Then $\widetilde U_j^N$ has the same distribution as $U_j^N$ in the equal-probability $N$-coupon model. Consequently,
\[
        \widetilde U_j^2\le \widetilde U_j^3\le \widetilde U_j^4\le\cdots\qquad\text{almost surely}.
\]
\end{theorem}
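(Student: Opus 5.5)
The plan is to realize the equal-probability model through the Poissonization of Section~2, with every coupon type running at rate one. By Lemma~\ref{lem:rate-normalization} the rates only matter up to a common scale. The jump-chain argument at the start of Section~2 shows that the coupon counts at the main completion time $X_1=\max_i E_i$ have the law of the discrete model, because $X_1$ is a jump time of the merged process. So $U_j^N$ has the law of $\#\{i: N_i(X_1)<j\}$, where $N_1,\ldots,N_N$ are independent rate-one Poisson processes and $E_i$ is the first arrival of $N_i$. The $E_i$ are iid $\Exp(1)$.

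\textbf{Step 1: decompose each process at its first arrival.} For each $i$, write $N_i(E_i+s)=1+\widetilde N_i(s)$. By the strong Markov property, $\widetilde N_i$ is a rate-one Poisson process independent of $E_i$. Since the $N_i$ are mutually independent, the whole family $(E_1,\ldots,E_N,\widetilde N_1,\ldots,\widetilde N_N)$ is independent. Let $T_i$ be the time of the $(j-1)$st arrival of $\widetilde N_i$; the $T_i$ are iid $\GammaDist(j-1,1)$ and independent of $(E_i)$. Then $N_i(X_1)<j$ holds if and only if $T_i>X_1-E_i$. The coupon attaining the maximum has $X_1-E_i=0$ and $T_i>0$, so it is always counted; this gives the leading $1$ in $\widetilde U_j^N$.

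\textbf{Step 2: relabel by rank.} Order the first arrivals as $E_{(1)}<\cdots<E_{(N)}$. For $\ell=1,\ldots,N-1$, let $\pi(\ell)$ be the index attaining $E_{(N-\ell)}$, and set $G_{\ell,j}=T_{\pi(\ell)}$ and $\xi_\ell=E_{(N-\ell+1)}-E_{(N-\ell)}$. The permutation $\pi$ is a function of $(E_i)$ alone. Hence, conditionally on $(E_i)$, the variables $G_{1,j},\ldots,G_{N-1,j}$ are still iid $\GammaDist(j-1,1)$, and so they are independent of $(E_i)$ and in particular of the spacings. By R\'enyi's representation of exponential order statistics, the spacing $E_{(k+1)}-E_{(k)}$ is $\Exp(N-k)$ and the spacings are independent. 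This gives $\xi_\ell\sim\Exp(\ell)$, independent. Since $X_1-E_{(N-\ell)}=\xi_1+\cdots+\xi_\ell$, we obtain
\[
U_j^N=1+\sum_{\ell=1}^{N-1}\ind{G_{\ell,j}>\xi_1+\cdots+\xi_\ell}.
\]
The joint law of the $\xi_\ell$ and $G_{\ell,j}$ for $\ell\le N-1$ matches the theorem, so $\widetilde U_j^N$ has the law of $U_j^N$.

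\textbf{Step 3: monotonicity.} This is immediate. The law of each individual summand does not depend on $N$, and we use a single infinite sequence $\xi_1,\xi_2,\ldots$ and $G_{1,j},G_{2,j},\ldots$. Then $\widetilde U_j^{N+1}-\widetilde U_j^N$ is a single indicator, which is nonnegative.

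The main obstacle is the independence bookkeeping in Step 2. One must check that attaching the Gamma clocks to ranks rather than to labels preserves their iid law and their independence from the spacings. The argument is to condition on $(E_i)$ and use that the $T_i$ are independent of $(E_i)$, so any $(E_i)$-measurable relabeling leaves their conditional law unchanged. A second point needing care is the strong Markov decomposition at $E_i$, which is a stopping time for $N_i$. It also has to be checked jointly across the independent processes.
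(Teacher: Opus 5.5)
Your proof is correct and follows the paper's argument: rate-one Poissonization, the strong Markov property at each first arrival to produce independent $\GammaDist(j-1,1)$ clocks, R\'enyi's representation of the top spacings as independent $\Exp(\ell)$ variables, and monotonicity from adding one nonnegative indicator per extra coupon. Your conditioning argument for attaching the clocks to ranks rather than labels makes explicit an independence step the paper only asserts. The one harmless slip is citing Lemma~\ref{lem:rate-normalization}, which concerns expectations only; it is your jump-chain observation immediately afterwards that justifies the rate change at the level of distributions.
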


\begin{proof}
Poissonize the equal-probability model and, after multiplying time by $N$, let the $N$ coupon processes have independent rate-one arrivals. Let
\[
        E_1,\ldots,E_N
\]
be their first arrival times. These are iid $\Exp(1)$ random variables. Write their order statistics as
\[
        E_{1:N}<\cdots<E_{N:N}.
\]
The main collector completes at $E_{N:N}$. The last coupon is always missing from the $j$th album, because it has appeared exactly once at the completion time.

Consider the coupon whose first arrival is $\ell$ places below the maximum, i.e. whose first arrival time is $E_{N-\ell:N}$, where $1\le\ell\le N-1$. Its age at completion is
\[
        A_\ell^{(N)}=E_{N:N}-E_{N-\ell:N}.
\]
By the strong Markov property, the additional waiting time after its first arrival until its $j$th arrival is $\GammaDist(j-1,1)$ and is independent of the first-arrival order statistics and of the corresponding variables for the other coupons. Thus this coupon contributes to $U_j^N$ if and only if
\[
        G_{\ell,j}>A_\ell^{(N)}.
\]
For iid exponential order statistics, the spacings
\[
        E_{N:N}-E_{N-1:N},\quad E_{N-1:N}-E_{N-2:N},\quad \ldots,\quad E_{2:N}-E_{1:N}
\]
are independent exponentials with rates $1,2,\ldots,N-1$, respectively. Therefore
\[
        A_\ell^{(N)}\stackrel{d}=\xi_1+\cdots+\xi_\ell.
\]
This representation is consistent as $N$ varies: the first $N-1$ top-spacing variables are the same variables $\xi_1,\ldots,\xi_{N-1}$. Hence
\[
        U_j^N\stackrel{d}=1+\sum_{\ell=1}^{N-1}\ind{G_{\ell,j}>\xi_1+\cdots+\xi_\ell}=\widetilde U_j^N.
\]
The almost-sure monotonicity is immediate from adding one nonnegative indicator when $N$ is increased by one.
\end{proof}

\begin{corollary}[Resolution of stochastic monotonicity]\label{cor:stoch-monotone}
For every fixed $j\ge2$,
\[
        U_j^N\le_{\st}U_j^{N+1}\qquad (N\ge2)
\]
in the equal-probability model.
\end{corollary}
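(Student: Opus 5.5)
This follows directly from Theorem~\ref{thm:coupling}, using the standard fact that an almost-sure coupling yields stochastic order. Fix $j\ge2$ and $N\ge2$. On the probability space carrying the independent sequences $(\xi_r)_{r\ge1}$ and $(G_{\ell,j})_{\ell\ge1}$, the coupled variables $\widetilde U_j^N$ and $\widetilde U_j^{N+1}$ differ by one summand:
\[
        \widetilde U_j^{N+1}-\widetilde U_j^N=\ind{G_{N,j}>\xi_1+\cdots+\xi_N}\ge0 .
\]
So $\widetilde U_j^N\le\widetilde U_j^{N+1}$ holds pointwise, not merely almost surely.

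For every real $x$, the pointwise inequality gives the event inclusion $\{\widetilde U_j^N>x\}\subseteq\{\widetilde U_j^{N+1}>x\}$. Hence
\[
        \Pbb(U_j^N>x)=\Pbb(\widetilde U_j^N>x)\le\Pbb(\widetilde U_j^{N+1}>x)=\Pbb(U_j^{N+1}>x).
\]
The two equalities use the distributional identity $\widetilde U_j^M\stackrel{d}=U_j^M$ from Theorem~\ref{thm:coupling}, applied with $M=N$ and $M=N+1$. This is the definition of $U_j^N\le_{\st}U_j^{N+1}$.

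The step needing care is that the distributional identity must hold for each $N$ separately with the same underlying variables. This is what Theorem~\ref{thm:coupling} provides. The top spacings $\xi_1,\dots,\xi_{N-1}$ of $N$ iid $\Exp(1)$ variables have laws $\Exp(1),\dots,\Exp(N-1)$ that do not depend on $N$. The Gamma variables are attached to ranks counted down from the maximum, and their joint law is likewise independent of $N$. Once this is granted, no further obstacle remains, and the restriction $N\ge2$ is only inherited from the indexing of the coupling.
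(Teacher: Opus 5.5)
Your proposal is correct and matches the paper's argument. The paper states this corollary as an immediate consequence of the nested coupling in Theorem~\ref{thm:coupling}. Your event inclusion $\{\widetilde U_j^N>x\}\subseteq\{\widetilde U_j^{N+1}>x\}$, combined with the marginal identities $\widetilde U_j^M\stackrel{d}=U_j^M$, is the ``apply an increasing function and take expectations'' step spelled out in Corollary~\ref{cor:order-consequences}, specialized to indicators of $(x,\infty)$.
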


\begin{corollary}[Order consequences of the nested coupling]\label{cor:order-consequences}
For every fixed $j\ge2$, the equal-probability sequence $U_j^N$ is increasing in the almost-sure, usual stochastic, increasing-convex, and increasing-transform orders. Equivalently, under the coupling of Theorem~\ref{thm:coupling}, $\widetilde U_j^N\le\widetilde U_j^{N+1}$ almost surely, and consequently, for every increasing function $f$ for which the expectations exist,
\[
        \E f(U_j^N)\le \E f(U_j^{N+1}).
\]
In particular, for $z>1$,
\[
        \E z^{U_j^N}\le \E z^{U_j^{N+1}},
\]
and for $s>0$,
\[
        \E\e^{-sU_j^N}\ge \E\e^{-sU_j^{N+1}}.
\]
\end{corollary}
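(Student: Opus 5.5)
The plan is to derive everything from the almost-sure nesting in Theorem~\ref{thm:coupling}, transferring it to each order through the identity in law $\widetilde U_j^N\stackrel{d}=U_j^N$.

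\emph{Almost-sure and usual stochastic order.} On the common probability space of Theorem~\ref{thm:coupling}, we have
\[
\widetilde U_j^{N+1}-\widetilde U_j^N=\ind{G_{N,j}>\xi_1+\cdots+\xi_N}\ge0,
\]
so the sequence is pathwise nondecreasing. This is the almost-sure order claim. For the usual stochastic order, fix $x$. The event $\{\widetilde U_j^N>x\}$ is contained in $\{\widetilde U_j^{N+1}>x\}$. Passing to laws gives $\Pbb(U_j^N>x)\le\Pbb(U_j^{N+1}>x)$, which also yields Corollary~\ref{cor:stoch-monotone}.

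\emph{Increasing transforms.} Let $f$ be increasing and assume the expectations exist. Then $f(\widetilde U_j^N)\le f(\widetilde U_j^{N+1})$ pointwise. Taking expectations, which is legitimate since both exist, gives $\E f(\widetilde U_j^N)\le\E f(\widetilde U_j^{N+1})$. The identity in law converts this into the displayed inequality for $U_j^N$.

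\emph{Increasing-convex order.} This is the special case of the previous step in which $f$ is increasing and convex.

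\emph{The two generating functions.} There is in fact no integrability issue, because $1\le U_j^N\le N$ is bounded. For $z>1$ the map $x\mapsto z^x$ is increasing, so the generating-function inequality follows from the increasing-transform step. For $s>0$ the map $x\mapsto\e^{-sx}$ is decreasing, so we apply the increasing-transform step to $-\e^{-sx}$, which reverses the inequality as displayed.

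The only point that needs care is that the coupling statement concerns only the tilde variables. Every conclusion about $U_j^N$ must therefore pass through the marginal identity in law for each fixed $N$. That identity is exactly what Theorem~\ref{thm:coupling} provides, so I expect no real obstacle here.
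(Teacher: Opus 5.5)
Your proposal is correct and follows essentially the same route as the paper. Both apply an increasing $f$ to the almost-sure inequality $\widetilde U_j^N\le\widetilde U_j^{N+1}$ from Theorem~\ref{thm:coupling}, take expectations, obtain the usual stochastic, increasing-convex and transform orders as specializations, and use the fact that $k\mapsto \e^{-sk}$ is decreasing to reverse the last inequality. Your explicit passage through the marginal identity in law, and your remark that $1\le U_j^N\le N$ removes any integrability concern, are small clarifications the paper leaves implicit.
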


\begin{proof}
The almost-sure inequality in Theorem~\ref{thm:coupling} implies the displayed expectation inequality for every increasing $f$ by applying $f$ to both sides and taking expectations. Usual stochastic order, increasing-convex order, and increasing-transform order are immediate specializations. The two displayed transform inequalities correspond to the increasing function $k\mapsto z^k$ when $z>1$ and the decreasing function $k\mapsto\e^{-sk}$ when $s>0$; the latter reverses the inequality.
\end{proof}

\begin{lemma}[Yule jump-time asymptotics]\label{lem:yule}
Let $\xi_r\sim\Exp(r)$ be independent, and put $S_\ell=\xi_1+\cdots+\xi_\ell$. Then there is a random variable $W\sim\Exp(1)$ such that
\[
        (\ell+1)\e^{-S_\ell}\to W
\]
almost surely and in $L^m$ for every fixed integer $m\ge1$. Consequently,
\[
        \frac{S_\ell}{\log\ell}\to1
\]
almost surely. Moreover, for every fixed $s>0$ and nonnegative integer $r$,
\begin{equation}\label{eq:yule-laplace}
        \E\left[\e^{-sS_\ell}S_\ell^r\right]
        =(-1)^r\frac{\dd^r}{\dd s^r}\frac{\Gamma(\ell+1)\Gamma(s+1)}{\Gamma(\ell+s+1)}.
\end{equation}
\end{lemma}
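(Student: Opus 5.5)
The approach is to realize $S_\ell$ as the jump times of a Yule process, or equivalently as top spacings of exponential order statistics, and then build a martingale. Let $Y(t)$ be a rate-one Yule process started from one individual. Its $\ell$th jump occurs at time $\xi_1+\cdots+\xi_\ell$, because while there are $r$ individuals the waiting time is $\Exp(r)$. So $S_\ell$ is the time at which the population reaches $\ell+1$. Classical theory gives that $\e^{-t}Y(t)$ is a nonnegative martingale converging almost surely to some $W$, and $Y(t)$ is geometric with mean $\e^{t}$, so $W\sim\Exp(1)$. Evaluating along the jump times $t=S_\ell$ gives $(\ell+1)\e^{-S_\ell}\to W$ almost surely.

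I would make this self-contained instead of citing Yule theory. The plan is to check directly that $M_\ell=(\ell+1)\e^{-S_\ell}$ is a discrete martingale. Indeed
\[
\E[\e^{-\xi_{\ell+1}}]=\frac{\ell+1}{\ell+2},
\]
so $\E[M_{\ell+1}\mid\mathcal F_\ell]=M_\ell$. Next I compute the moments. From the product
\[
\E\e^{-sS_\ell}=\prod_{r=1}^{\ell}\frac{r}{r+s}=\frac{\Gamma(\ell+1)\Gamma(s+1)}{\Gamma(\ell+s+1)},
\]
differentiating under the expectation $r$ times in $s$ gives \eqref{eq:yule-laplace} at once; this is justified by dominated convergence since $S_\ell\ge0$ and $s>0$. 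Taking $s=m$ and $r=0$ yields
\[
\E M_\ell^m=(\ell+1)^m\frac{\ell!\,m!}{(\ell+m)!}\to m!.
\]
So $\sup_\ell\E M_\ell^m<\infty$ for every $m$. Doob's $L^m$ convergence theorem then gives almost sure and $L^m$ convergence to some $W$ with $\E W^m=m!$. These are the moments of $\Exp(1)$, which is determined by its moments (Carleman's condition holds), so $W\sim\Exp(1)$.

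Since $W>0$ almost surely, taking logarithms gives $S_\ell=\log(\ell+1)-\log M_\ell$ with $\log M_\ell\to\log W$ finite. Hence $S_\ell/\log\ell\to1$ almost surely.

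The main point needing care is the identification of the law of $W$ together with $W>0$. The moment computation handles both, since a law with the $\Exp(1)$ moments has no atom at $0$. Everything else is routine.
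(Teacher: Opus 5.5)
Your proof is correct; it is the discrete-time version of the paper's argument, and it differs in two places worth noting.

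The paper works with the continuous-time Yule martingale $\e^{-t}Z(t)$:
\begin{itemize}
\item almost sure convergence comes from nonnegativity;
\item the law of $W$ is read off from the explicit geometric marginal $\Pbb(Z(t)=n)=\e^{-t}(1-\e^{-t})^{n-1}$;
\item the limit is then sampled along the jump times $t=S_\ell\to\infty$;
\item $L^m$ convergence comes from combining the almost sure limit with the moment computation $(\ell+1)^m\prod_{r\le\ell}r/(r+m)\to m!=\E W^m$.
\end{itemize}

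You instead check the martingale property of $M_\ell=(\ell+1)\e^{-S_\ell}$ directly from $\E\e^{-\xi_{\ell+1}}=(\ell+1)/(\ell+2)$. You use the same moment computation only to get $L^m$-boundedness, so Doob's theorem gives the almost sure and $L^m$ limits at once. For $m=1$ you need the $L^2$ bound rather than $L^1$-boundedness, which your bounds for all $m$ supply.

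The price is that $W$ must then be identified through moment determinacy of $\Exp(1)$. That is fine, by Carleman's condition or because $\Exp(1)$ has a finite moment generating function near $0$. The paper avoids the moment problem entirely by reading off the law from the geometric marginal.

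So your route is fully self-contained and needs no continuous-time Yule theory, while the paper's identifies the distribution of $W$ more directly. Your logarithm step for $S_\ell/\log\ell$ (using $W>0$) and your differentiation of the product formula under the expectation coincide with the paper's.
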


\begin{proof}
Construct the Yule pure-birth process $Z(t)$ started from $Z(0)=1$, with birth rate $n$ in state $n$. Its holding times are precisely $\xi_r\sim\Exp(r)$, and if $S_\ell$ is the time of the $\ell$th birth, then $Z(S_\ell)=\ell+1$. The process
\[
        M(t):=\e^{-t}Z(t)
\]
is a nonnegative mean-one martingale, so $M(t)\to W$ almost surely for some finite random variable $W$. Also
\[
        \Pbb(Z(t)=n)=\e^{-t}(1-\e^{-t})^{n-1},\qquad n\ge1.
\]
Therefore $\e^{-t}Z(t)\to W$ in distribution with $W\sim\Exp(1)$. The moment formula for the geometric distribution also gives convergence of all fixed moments; hence $M(t)\to W$ in $L^m$ for every fixed $m$. Evaluating at $t=S_\ell$ gives
\[
        (\ell+1)\e^{-S_\ell}=M(S_\ell)\to W
\]
almost surely. For fixed $m$,
\[
        \E\left[((\ell+1)\e^{-S_\ell})^m\right]
        = (\ell+1)^m\prod_{r=1}^\ell\frac{r}{r+m}\to m!=\E W^m,
\]
so the same convergence holds in $L^m$. Since $W>0$ almost surely,
\[
        S_\ell=\log(\ell+1)-\log((\ell+1)\e^{-S_\ell})=\log(\ell+1)+o(\log\ell)
\]
almost surely. Finally,
\[
        \E[\e^{-sS_\ell}]=\prod_{r=1}^\ell\frac{r}{r+s}
        =\frac{\Gamma(\ell+1)\Gamma(s+1)}{\Gamma(\ell+s+1)},
\]
and differentiating $r$ times with respect to $s$ gives \eqref{eq:yule-laplace}.
\end{proof}

\begin{proposition}[Joint top-spacing representation]\label{prop:top-spacing}
Fix $J\ge2$. Let $Y_1,Y_2,\ldots$ be iid rate-one Poisson processes, independent of the spacings $\xi_1,\xi_2,\ldots$, and put $S_\ell=\xi_1+\cdots+\xi_\ell$. Then, in the equal-probability model,
\[
        (U_2^N,\ldots,U_J^N)\stackrel{d}=\left(1+\sum_{\ell=1}^{N-1}\ind{Y_\ell(S_\ell)\le j-2}\right)_{j=2}^J.
\]
\end{proposition}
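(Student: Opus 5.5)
We follow the proof of Theorem~\ref{thm:coupling}, now tracking all album indices $j=2,\ldots,J$ at once. Poissonize the equal-probability model as in Proposition~\ref{prop:poissonized} and rescale time by $N$. The $N$ coupon processes are then iid rate-one Poisson processes $\Pi_1,\ldots,\Pi_N$ with first-arrival times $E_1,\ldots,E_N$, which are iid $\Exp(1)$. The main collector completes at $E_{N:N}$. At that time coupon $i$ is missing from album $j$ if and only if $\Pi_i(E_{N:N})\le j-1$. Rescaling time does not affect the counts, so $(U_2^N,\ldots,U_J^N)$ is a function of the counts $\Pi_i(E_{N:N})$.

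\textbf{Post-first-arrival processes.} For each coupon $i$, put $Y^{(i)}(s)=\Pi_i(E_i+s)-1$. By the strong Markov property at the stopping time $E_i$, $Y^{(i)}$ is a rate-one Poisson process independent of $E_i$. Since the $\Pi_i$ are independent, the family $\{Y^{(i)}\}_{i\le N}$ is iid and independent of $(E_1,\ldots,E_N)$. This independence is the one point needing care, and it follows from the independence of the first arrival and the remaining path within each coordinate.

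\textbf{Relabeling by rank.} Let $\sigma$ be the (a.s.\ unique) permutation with $E_{\sigma(N-\ell)}=E_{N-\ell:N}$, and set $Y_\ell=Y^{(\sigma(N-\ell))}$ for $0\le\ell\le N-1$. Because $\sigma$ is measurable with respect to the $E$'s, and the $Y^{(i)}$ are iid and independent of the $E$'s, conditioning on the $E$'s shows that $(Y_0,\ldots,Y_{N-1})$ is iid rate-one Poisson and independent of the order statistics. The coupon with rank $\ell$ below the top has count $1+Y_\ell(A_\ell^{(N)})$ at completion, where $A_\ell^{(N)}=E_{N:N}-E_{N-\ell:N}$. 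It is therefore missing from album $j$ exactly when $Y_\ell(A_\ell^{(N)})\le j-2$.

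\textbf{Contribution of the top coupon.} For $\ell=0$ the age is $0$ and $Y_0(0)=0\le j-2$, so the top coupon is missing from every album with $j\ge2$. This gives the constant $1$ in each coordinate. Thus, simultaneously for all $j$,
\[
U_j^N=1+\sum_{\ell=1}^{N-1}\ind{Y_\ell(A_\ell^{(N)})\le j-2}
\]
almost surely in the Poissonized construction.

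\textbf{Identifying the ages.} The top spacings are independent exponentials with rates $1,\ldots,N-1$ (the Rényi representation, as used in Theorem~\ref{thm:coupling}). Hence $(A_1^{(N)},\ldots,A_{N-1}^{(N)})\stackrel{d}=(S_1,\ldots,S_{N-1})$ jointly. By the independence of the $Y$'s from the ages, the joint law of $(Y_\ell,A_\ell^{(N)})_{\ell\ge1}$ equals that of $(Y_\ell,S_\ell)_{\ell\ge1}$. Since the whole vector $(U_2^N,\ldots,U_J^N)$ is the same measurable function of these inputs, the claimed joint distributional identity follows.

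As a consistency check, $\Pbb(Y_\ell(s)\le j-2\mid s)=\Pbb(\GammaDist(j-1,1)>s)$, which recovers Theorem~\ref{thm:coupling} in each coordinate.
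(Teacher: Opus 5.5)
Your proposal is correct and follows essentially the same route as the paper: Poissonize and rescale time, rank the coupons by first arrival, identify the ages with $S_\ell$ through the R\'enyi top-spacing representation, and read off membership in album $j$ from the number of post-first-arrival arrivals. The only difference is that you spell out the conditional independence that the paper asserts in a single sentence, using the strong Markov property at each $E_i$ and then relabeling the post-first-arrival processes by a permutation that is measurable with respect to the $E$'s.
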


\begin{proof}
Use the Poissonized construction from the proof of Theorem~\ref{thm:coupling}. For the coupon whose first arrival is $\ell$ places below the maximum, the age at completion has the same law as $S_\ell$. Conditional on the first-arrival order statistics, the numbers of additional arrivals of the different ranked coupons during their ages are independent Poisson random variables with respective means equal to those ages. These counts may therefore be represented as $Y_\ell(S_\ell)$, independently over $\ell$. The coupon is missing from the $j$th album precisely when it has at most $j-2$ additional arrivals after its first arrival. The coupon whose first arrival is last contributes the leading $1$ to every $U_j^N$, $j\ge2$.
\end{proof}

\begin{lemma}[Weighted $L^m$ Toeplitz lemma]\label{lem:toeplitz}
Let $X_1,X_2,\ldots$ and $X$ be random variables with $X_\ell\to X$ in $L^m$. Let $w_{N,\ell}\ge0$, $1\le\ell<N$, satisfy
\[
        \sum_{\ell<N}w_{N,\ell}\to1,
        \qquad \sup_N\sum_{\ell<N}w_{N,\ell}<\infty,
\]
and, for every fixed $L$,
\[
        \sum_{\ell\le L}w_{N,\ell}\to0.
\]
Then
\[
        \sum_{\ell<N}w_{N,\ell}X_\ell\to X
\]
in $L^m$.
\end{lemma}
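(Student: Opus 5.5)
The plan is the standard Toeplitz argument: split off a finite head of indices and control the tail by the $L^m$ convergence of $X_\ell$. Write $\|\cdot\|_m$ for the $L^m$ norm, put $\sigma_N=\sum_{\ell<N}w_{N,\ell}$ and $\Sigma=\sup_N\sigma_N<\infty$. We decompose
\[
        \sum_{\ell<N}w_{N,\ell}X_\ell-X
        =\sum_{\ell<N}w_{N,\ell}(X_\ell-X)+(\sigma_N-1)X .
\]
Since $X_\ell\to X$ in $L^m$, we have $X\in L^m$ (at least for $\ell$ large, and we tacitly assume each $X_\ell\in L^m$, as in the application). Then $\|(\sigma_N-1)X\|_m=|\sigma_N-1|\,\|X\|_m\to0$.

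For the first sum we use Minkowski's inequality, which applies because $m\ge1$ and the weights are nonnegative:
\[
        \Bigl\|\sum_{\ell<N}w_{N,\ell}(X_\ell-X)\Bigr\|_m\le\sum_{\ell<N}w_{N,\ell}\|X_\ell-X\|_m .
\]
Fix $\varepsilon>0$ and choose $L$ such that $\|X_\ell-X\|_m\le\varepsilon$ for all $\ell>L$. The indices $\ell>L$ then contribute at most $\varepsilon\sigma_N\le\varepsilon\Sigma$. The indices $\ell\le L$ contribute at most $\max_{\ell\le L}\|X_\ell-X\|_m\cdot\sum_{\ell\le L}w_{N,\ell}$. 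This is a finite constant times a quantity tending to zero by the hypothesis on fixed initial blocks. Taking $\limsup_N$ and then letting $\varepsilon\downarrow0$ finishes the proof.

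There is no real obstacle here. The only point needing care is that Minkowski's inequality requires $m\ge1$ and that each $X_\ell$ has finite $m$th moment. In the intended application the $X_\ell$ will be bounded functionals of $(\ell+1)\e^{-S_\ell}$ or similar, covered by Lemma~\ref{lem:yule}, so both conditions hold.
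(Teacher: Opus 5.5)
Your proposal is correct and follows the paper's proof essentially verbatim. It uses the same decomposition $\sum_{\ell<N}w_{N,\ell}(X_\ell-X)+(\sigma_N-1)X$, the triangle (Minkowski) inequality in $L^m$, and the split at a fixed $L$: the head is killed by the vanishing initial-block weight, the tail is bounded by $\sup_{\ell>L}\|X_\ell-X\|_m$ times the bounded total weight, and your explicit note that $m\ge1$ and $\|X_\ell-X\|_m<\infty$ for the finitely many head indices only makes precise a point the paper leaves implicit.
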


\begin{proof}
By the triangle inequality,
\[
        \left\|\sum_{\ell<N}w_{N,\ell}X_\ell-X\right\|_m
        \le\sum_{\ell<N}w_{N,\ell}\|X_\ell-X\|_m
        +\left|\sum_{\ell<N}w_{N,\ell}-1\right|\|X\|_m.
\]
Fix $L$. The contribution of $\ell\le L$ tends to zero because the total weight on this finite set tends to zero. For $\ell>L$, the contribution is at most
\[
        \left(\sup_{\ell>L}\|X_\ell-X\|_m\right)\sum_{\ell<N}w_{N,\ell}.
\]
Letting first $N\to\infty$ and then $L\to\infty$ proves the claim.
\end{proof}

\begin{lemma}[Conditional residual means]\label{lem:conditional-means}
For fixed $j\ge2$ set
\[
        a_{N,j}=\frac{(\log N)^{j-1}}{(j-1)!}
\]
and
\[
        q_{\ell,j}=\e^{-S_\ell}\sum_{m=0}^{j-2}\frac{S_\ell^m}{m!},
        \qquad Q_{N,j}=\sum_{\ell=1}^{N-1}q_{\ell,j}.
\]
Then
\[
        \frac{Q_{N,j}}{a_{N,j}}\to W
\]
almost surely and in $L^m$ for every fixed integer $m\ge1$.
\end{lemma}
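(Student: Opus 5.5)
The plan is to write $q_{\ell,j}$ as a weighted version of the Yule martingale quantity from Lemma~\ref{lem:yule} and then apply the weighted Toeplitz Lemma~\ref{lem:toeplitz}. Set
\[
X_\ell=(\ell+1)\e^{-S_\ell},
\]
so that $X_\ell\to W$ almost surely and in every $L^m$. Then
\[
q_{\ell,j}=\frac{X_\ell}{\ell+1}\sum_{m=0}^{j-2}\frac{S_\ell^m}{m!}.
\]
Since $S_\ell/\log\ell\to1$ almost surely, the polynomial factor is dominated by its top term, $S_\ell^{j-2}/(j-2)!\sim(\log\ell)^{j-2}/(j-2)!$. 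This suggests comparing $Q_{N,j}$ with the deterministic sum
\[
\sum_{\ell<N}\frac{(\log\ell)^{j-2}}{(\ell+1)(j-2)!}\sim\frac{(\log N)^{j-1}}{(j-1)!}=a_{N,j}.
\]
The natural weights are therefore
\[
w_{N,\ell}=\frac{(\log(\ell+1))^{j-2}}{(j-2)!\,(\ell+1)\,a_{N,j}}.
\]
These are nonnegative, their total tends to $1$ by an integral comparison, and the weight on any fixed initial segment tends to $0$ because $a_{N,j}\to\infty$. For $j=2$ this is just the harmonic sum divided by $\log N$.

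With these weights, write
\[
\frac{Q_{N,j}}{a_{N,j}}=\sum_{\ell<N}w_{N,\ell}Y_\ell,\qquad
Y_\ell=X_\ell\cdot\frac{(j-2)!}{(\log(\ell+1))^{j-2}}\sum_{m=0}^{j-2}\frac{S_\ell^m}{m!}=X_\ell R_\ell .
\]
For almost sure convergence, $R_\ell\to1$ almost surely by Lemma~\ref{lem:yule}, so $Y_\ell\to W$ almost surely. The deterministic Toeplitz lemma, applied pathwise, then gives $Q_{N,j}/a_{N,j}\to W$ almost surely. For $L^m$ convergence, the key step is to show $Y_\ell\to W$ in $L^m$, after which Lemma~\ref{lem:toeplitz} finishes the argument.

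The main obstacle is the $L^m$ convergence of $Y_\ell=X_\ell R_\ell$. Almost sure convergence is already known, so it suffices to prove uniform integrability, for instance by bounding $\sup_\ell\E Y_\ell^{2m}<\infty$. By Cauchy--Schwarz, this needs $\sup_\ell\E X_\ell^{4m}<\infty$, which holds by the exact product moment formula in the proof of Lemma~\ref{lem:yule}. It also needs
\[
\sup_\ell \E\bigl[(S_\ell/\log(\ell+1))^{4m(j-2)}\bigr]<\infty .
\]
For this I would use $\E S_\ell=H_\ell$ and $\Var S_\ell=\sum r^{-2}<\pi^2/6$. More generally, $S_\ell-H_\ell$ has all centered moments bounded uniformly in $\ell$, since its cumulants are $(k-1)!\sum_r r^{-k}$, which are bounded. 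Hence $\E S_\ell^p=O((\log\ell)^p)$. The finitely many small $\ell$ are harmless: all moments are finite, and I would use $\log(\ell+1)\ge\log2$ to avoid division by zero. Alternatively, one could avoid the Cauchy--Schwarz step and expand $\E[\e^{-kS_\ell}S_\ell^r]$ directly through \eqref{eq:yule-laplace}, as a check on the bounds.
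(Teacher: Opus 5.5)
Your proof is correct. Its skeleton matches the paper's: both write $Q_{N,j}/a_{N,j}$ as a Toeplitz average, with weights proportional to $(\log\ell)^{j-2}/\ell$, of summands converging to $W$, and both finish with Lemma~\ref{lem:toeplitz}. The two arguments differ in how the summands are set up and in how their $L^m$ convergence is proved.

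\emph{The paper's route.} It isolates only the top power, taking $X_{\ell,j}=\ell\e^{-S_\ell}(S_\ell/\log\ell)^{j-2}$, and asserts briefly that the lower powers of $S_\ell$ contribute $o_{L^m}(b_{\ell,j})$. It then proves $X_{\ell,j}\to W$ in $L^m$ by exponential tilting. Under the density proportional to $\e^{-mS_\ell}$, the $\xi_r$ become independent $\Exp(r+m)$, so $S_\ell/\log\ell\to1$ in every moment under the tilted law. Combined with $\ell^m\E\e^{-mS_\ell}\to m!$, this gives $\E[(\ell\e^{-S_\ell})^m|S_\ell/\log\ell-1|^r]\to0$ directly.

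\emph{Your route.} You use an exact identity $Q_{N,j}/a_{N,j}=\sum_{\ell<N}w_{N,\ell}X_\ell R_\ell$, with the whole polynomial absorbed into $R_\ell$. So there is no error term to control, and shifting to $\log(\ell+1)$ avoids the $\log 1=0$ issue the paper handles by discarding initial indices. You then get $L^m$ convergence from almost sure convergence plus uniform integrability. The latter comes from Cauchy--Schwarz, the exact product formula for $\E X_\ell^{4m}$, and uniform moment bounds on $S_\ell/\log(\ell+1)$ via the bounded cumulants $(k-1)!\sum_r r^{-k}$ of $S_\ell-H_\ell$.

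\emph{What each buys.} Your version is more elementary and makes the lower-order terms fully explicit. The paper's tilting treats the dependence between $\e^{-S_\ell}$ and $S_\ell$ exactly through a factorization rather than decoupling them by Cauchy--Schwarz. It is more quantitative and needs nothing beyond the Laplace transform \eqref{eq:yule-laplace}.
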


\begin{proof}
The almost-sure convergence follows from Lemma~\ref{lem:yule}:
\[
        q_{\ell,j}\sim \frac{W}{\ell}\frac{(\log\ell)^{j-2}}{(j-2)!}\qquad\text{almost surely},
\]
and the elementary estimate
\[
        \sum_{\ell<N}\frac{(\log\ell)^{j-2}}{\ell}\sim\frac{(\log N)^{j-1}}{j-1}.
\]
For $L^m$ convergence, ignore finitely many initial indices and put
\[
        b_{\ell,j}=\frac{(\log\ell)^{j-2}}{(j-2)!\,\ell},
        \qquad B_{N,j}=\sum_{\ell<N}b_{\ell,j}.
\]
Then $B_{N,j}\sim a_{N,j}$. We claim that
\begin{equation}\label{eq:q-asymp-Lm}
        q_{\ell,j}=b_{\ell,j}(X_{\ell,j}+o_{L^m}(1)),
        \qquad X_{\ell,j}\to W\text{ in }L^m.
\end{equation}
For the leading term in $q_{\ell,j}$, take
\[
        X_{\ell,j}=\ell\e^{-S_\ell}\left(\frac{S_\ell}{\log\ell}\right)^{j-2},
\]
with the exponent interpreted as zero when $j=2$. Lemma~\ref{lem:yule} gives $\ell\e^{-S_\ell}\to W$ in every fixed $L^m$. Also, under the probability measure with density proportional to $\e^{-mS_\ell}$, the variables $\xi_r$ are independent exponentials with rates $r+m$; hence $S_\ell/\log\ell\to1$ in every fixed moment. Since $\ell^m\E\e^{-mS_\ell}\to\Gamma(m+1)$, equivalently by differentiating \eqref{eq:yule-laplace}, for every fixed integer $r\ge1$,
\[
        \E\left[(\ell\e^{-S_\ell})^m\left|\frac{S_\ell}{\log\ell}-1\right|^r\right]\to0.
\]
Thus $X_{\ell,j}\to W$ in $L^m$. The lower powers $S_\ell^0,\ldots,S_\ell^{j-3}$ in $q_{\ell,j}$ are smaller by powers of $\log\ell$ and contribute $o_{L^m}(b_{\ell,j})$. This proves \eqref{eq:q-asymp-Lm}.

Now write
\[
        \frac{Q_{N,j}}{a_{N,j}}=\frac{B_{N,j}}{a_{N,j}}\sum_{\ell<N}\frac{b_{\ell,j}}{B_{N,j}}X_{\ell,j}+o_{L^m}(1).
\]
The weights $b_{\ell,j}/B_{N,j}$ are nonnegative, have total mass one, and put vanishing mass on every fixed finite set of indices. Lemma~\ref{lem:toeplitz} gives
\[
        \sum_{\ell<N}\frac{b_{\ell,j}}{B_{N,j}}X_{\ell,j}\to W
\]
in $L^m$. Since $B_{N,j}/a_{N,j}\to1$, the desired $L^m$ convergence follows.
\end{proof}

\begin{theorem}[Joint fixed-index distributional and moment limit]\label{thm:joint-limit}
Fix $J\ge2$ and set $a_{N,j}=(\log N)^{j-1}/(j-1)!$. In the equal-probability model,
\[
        \left(\frac{U_2^N}{a_{N,2}},\frac{U_3^N}{a_{N,3}},\ldots,\frac{U_J^N}{a_{N,J}}\right)
        \Rightarrow (W,W,\ldots,W),\qquad W\sim\Exp(1).
\]
For every fixed nonnegative integer vector $\mathbf m=(m_2,\ldots,m_J)$,
\[
        \E\prod_{j=2}^J\left(\frac{U_j^N}{a_{N,j}}\right)^{m_j}
        \to (m_2+\cdots+m_J)!.
\]
For $J=2$ this recovers the Papanicolaou--Doumas limit theorem \cite{PapanicolaouDoumas}.
\end{theorem}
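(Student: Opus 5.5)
We will work entirely inside the representation of Proposition~\ref{prop:top-spacing}. Since the theorem concerns only the law of the vector, we may assume that for every $N$
\[
U_j^N=1+\sum_{\ell=1}^{N-1}\ind{Y_\ell(S_\ell)\le j-2},\qquad 2\le j\le J,
\]
on one probability space. On that space the spacings $\xi_r$, and therefore $W$ from Lemma~\ref{lem:yule}, are the same for every $N$.

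\emph{Step 1: conditioning on the spacings.} Let $\mathcal F$ be the $\sigma$-field generated by $(\xi_r)$. Conditionally on $\mathcal F$, the indicators $\ind{Y_\ell(S_\ell)\le j-2}$, $\ell\ge1$, are independent Bernoulli variables with success probability $q_{\ell,j}$. Therefore
\[
\E[U_j^N\mid\mathcal F]=1+Q_{N,j}.
\]
For fixed $j$, the indicators for different $\ell$ are conditionally independent, so
\[
\Var(U_j^N\mid\mathcal F)=\sum_{\ell<N}q_{\ell,j}(1-q_{\ell,j})\le Q_{N,j}.
\]
Hence
\[
\E\bigl[(U_j^N-1-Q_{N,j})^2\bigr]\le \E Q_{N,j}=O(a_{N,j}).
\]
Dividing by $a_{N,j}^2$, the left side is $o(1)$ because $a_{N,j}\to\infty$. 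Together with Lemma~\ref{lem:conditional-means}, this gives $U_j^N/a_{N,j}\to W$ in $L^2$, and in particular in probability. The limit $W$ does not depend on $j$, so all coordinates converge in probability to the same $W$. This yields the joint convergence in distribution to $(W,\dots,W)$, and it does so without any separate treatment of correlations between different $j$.

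\emph{Step 2: moments.} Weak convergence together with uniform integrability of $\prod_j (U_j^N/a_{N,j})^{m_j}$ gives convergence of mixed moments to $\E W^{m_2+\cdots+m_J}=(m_2+\cdots+m_J)!$. By H\"older's inequality it suffices to show that, for each $j$ and each integer $m$, the quantity $\sup_N\E[(U_j^N/a_{N,j})^{m}]$ is finite.

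\emph{Step 3: the moment bound, which is the main obstacle.} Conditionally on $\mathcal F$, $U_j^N-1$ is a sum of independent Bernoulli variables with mean $Q:=Q_{N,j}$. Dominating the Bernoulli sum by a Poisson variable, or using the standard bound on moments of a Bernoulli sum, we aim for
\[
\E[(U_j^N-1)^m\mid\mathcal F]\le C_m(Q+Q^m).
\]
Taking expectations gives
\[
\E(U_j^N)^m\le C_m'\bigl(1+\E Q_{N,j}+\E Q_{N,j}^m\bigr).
\]
Lemma~\ref{lem:conditional-means} supplies $L^m$ convergence of $Q_{N,j}/a_{N,j}$, which implies $\E Q_{N,j}^m=O(a_{N,j}^m)$. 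Since $a_{N,j}\to\infty$, the remaining terms are $o(a_{N,j}^m)$. Applying the same bound with $m$ replaced by $m+1$ gives uniform integrability of the $m$-th powers, and hence convergence of the mixed moments. In fact the argument shows $L^m$ convergence of each coordinate, so H\"older's inequality together with $L^p$ convergence to a common $W$ already gives the mixed-moment limit directly.

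\emph{Step 4: the case $J=2$.} Here the statement reads $U_2^N/\log N\Rightarrow\Exp(1)$, which is the Papanicolaou--Doumas theorem.
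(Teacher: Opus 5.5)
Your proof is correct and follows essentially the same route as the paper: you use the top-spacing representation, condition on the spacings, bound the conditional variance by $Q_{N,j}$ to remove the Bernoulli fluctuations, and invoke Lemma~\ref{lem:conditional-means} for the common limit $W$. The only difference is in the moment step. The paper applies a conditional Rosenthal inequality to the centered sums, whereas you bound raw conditional moments by $C_m(Q_{N,j}+Q_{N,j}^m)$ and then use uniform integrability. Your bound is valid because the conditional factorial moments of the Bernoulli sum $U_j^N-1$ are at most $Q_{N,j}^k$; this is domination in moments, not stochastic domination by a Poisson variable. The two moment arguments are interchangeable here.
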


\begin{proof}
Use Proposition~\ref{prop:top-spacing}. Conditional on the spacings, the summands are independent over $\ell$, and
\[
        \Pbb(Y_\ell(S_\ell)\le j-2\mid S_1,S_2,\ldots)=q_{\ell,j}.
\]
By Lemma~\ref{lem:conditional-means}, the conditional means satisfy
\[
        \frac{Q_{N,j}}{a_{N,j}}\to W
\]
jointly, almost surely and in every fixed $L^m$.

It remains to remove the conditional Bernoulli fluctuations. Let
\[
        B_{\ell,j}=\ind{Y_\ell(S_\ell)\le j-2}.
\]
Conditional on the spacings,
\[
        \Var\left(\sum_{\ell=1}^{N-1}B_{\ell,j}\ \middle|\ S_1,S_2,\ldots\right)\le Q_{N,j}.
\]
Hence
\[
        \frac1{a_{N,j}}\sum_{\ell=1}^{N-1}(B_{\ell,j}-q_{\ell,j})\to0
\]
in $L^2$, and therefore in probability. For mixed moments, use the standard conditional Rosenthal inequality for sums of independent centered bounded variables:
\[
        \E\left[\left|\sum_{\ell=1}^{N-1}(B_{\ell,j}-q_{\ell,j})\right|^r\ \middle|\ S_1,S_2,\ldots\right]
        \le C_r(Q_{N,j}+Q_{N,j}^{r/2})
\]
for every fixed $r\ge2$. Lemma~\ref{lem:conditional-means} then gives
\[
        \frac1{a_{N,j}}\sum_{\ell=1}^{N-1}(B_{\ell,j}-q_{\ell,j})\to0\quad\text{in }L^r
\]
for every fixed $r$. Since $1/a_{N,j}\to0$, the leading last-coupon contribution is negligible on every scale $a_{N,j}$. Therefore each normalized coordinate is equal, up to an $L^r$-negligible error, to $Q_{N,j}/a_{N,j}$. The joint convergence and the mixed-moment convergence follow from Lemma~\ref{lem:conditional-means} and H\"older's inequality.
\end{proof}

\begin{corollary}[Variance, covariance, and asymptotic perfect correlation]\label{cor:correlation}
For every fixed $i,j\ge2$,
\[
        \Var(U_j^N)\sim a_{N,j}^2,
\]
\[
        \Cov(U_i^N,U_j^N)\sim a_{N,i}a_{N,j},
\]
and, for $i\ne j$,
\[
        \Corr(U_i^N,U_j^N)\to1.
\]
\end{corollary}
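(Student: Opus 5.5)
The plan is to read the corollary directly off the mixed-moment statement of Theorem~\ref{thm:joint-limit}, which gives, for every fixed nonnegative integer vector $\mathbf m$,
\[
        \E\prod_{j=2}^J\left(\frac{U_j^N}{a_{N,j}}\right)^{m_j}\to(m_2+\cdots+m_J)!.
\]
Fix $i,j\ge2$ and take $J=\max(i,j)$.

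\textbf{Variance.} Apply the mixed-moment limit with $m_j=1$ and all other exponents zero: $\E U_j^N/a_{N,j}\to1$. With $m_j=2$: $\E (U_j^N)^2/a_{N,j}^2\to 2$. Hence
\[
        \frac{\Var(U_j^N)}{a_{N,j}^2}\to 2-1^2=1,
\]
which is $\Var(U_j^N)\sim a_{N,j}^2$.

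\textbf{Covariance.} For $i\ne j$, take $m_i=m_j=1$, which gives $\E[U_i^NU_j^N]/(a_{N,i}a_{N,j})\to2!=2$. Combined with the first-moment limits,
\[
        \frac{\Cov(U_i^N,U_j^N)}{a_{N,i}a_{N,j}}\to 2-1\cdot1=1.
\]
When $i=j$ the covariance statement is just the variance statement.

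\textbf{Correlation.} Divide the covariance asymptotic by $\sqrt{\Var(U_i^N)\Var(U_j^N)}\sim a_{N,i}a_{N,j}$, so $\Corr(U_i^N,U_j^N)\to1$. The variances are eventually positive by the variance asymptotic, so the correlation is well defined for large $N$.

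There is no real obstacle: the whole content is the $L^r$ control already established in Theorem~\ref{thm:joint-limit}. The only point to check is that the limiting constants are nondegenerate. This holds because $\E W^2-(\E W)^2=1\neq0$ for $W\sim\Exp(1)$, so the normalized limits of the variance and covariance are nonzero and "$\sim$" is legitimate.
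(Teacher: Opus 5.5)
Your proposal is correct and matches the paper's proof: it also reads off the moment cases of total order at most two from Theorem~\ref{thm:joint-limit} (so $\E W=1$, $\E W^2=2$), obtains the covariance asymptotic with the variance as the case $i=j$, and divides by the standard deviations to get the correlation limit. Your remark that the limiting variance $\E W^2-(\E W)^2=1$ is nonzero, which is what makes the ``$\sim$'' statements meaningful, is a harmless extra detail.
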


\begin{proof}
These are the moment cases of Theorem~\ref{thm:joint-limit} of total order at most two. Since $\E W=1$ and $\Var(W)=1$, we have
\[
        \E U_j^N\sim a_{N,j},
        \qquad \E[U_i^NU_j^N]\sim2a_{N,i}a_{N,j},
\]
which gives the covariance formula. The variance formula is the case $i=j$, and the correlation statement follows by dividing the covariance asymptotic by the product of the standard deviations.
\end{proof}

\begin{remark}[Fixed-index regime]
Theorem~\ref{thm:joint-limit} is a fixed-$J$ result. It does not address regimes in which the album index $j=j_N$ grows with $N$.
\end{remark}

\section{Concluding remarks}

We have proved three complementary facts about the siblings of the coupon collector. First, for each fixed $N$ and album index $j\ge2$, the expected number of empty spaces in collector $j$'s album is uniquely maximized by the uniform coupon distribution; in fact it decreases strictly along every nonconstant ray from the uniform vector. Second, in the equal-probability case, $U_j^N$ is stochastically increasing in $N$, with an explicit monotone coupling. Third, the same top-spacing representation gives a joint fixed-index limit theorem: after the natural normalizations, the residual counts for all fixed album indices converge to the same exponential random variable.

The extremality proof is finite-dimensional and exact, based on an alternating subset expansion and positive pair kernels. The monotonicity and limit-law proofs are probabilistic, based on the nested structure of top exponential spacings and the Yule martingale limit. For $j=2$, the distributional theorem recovers the first-sibling limit of Papanicolaou and Doumas \cite{PapanicolaouDoumas}; for any fixed $J>2$, it extends that result jointly to the first $J-1$ siblings and shows that their normalized residual counts are asymptotically perfectly correlated.

Several directions remain natural. The transfer theorem for leading expectations can likely be pushed to a second-order endpoint expansion, which should recover the full three-term expansion of Doumas and Papanicolaou from the same Poissonized calculation. It would also be interesting to determine which parts of the joint fixed-index limit persist when the album index grows with $N$, or under nonuniform probability arrays satisfying suitable endpoint hypotheses. Thus the equal-probability model is extremal at finite $N$, monotone in the alphabet size, and asymptotically governed by a single random amplitude shared by all fixed album indices.

\appendix
\section{Verification of the endpoint hypotheses for standard arrays}

\begin{proposition}[Standard endpoint arrays]\label{prop:standard-endpoint}
For every fixed $j\ge2$, Hypothesis~\ref{hyp:endpoint} holds for each of the two families
\[
        f(x)=x^\alpha,\qquad \alpha>0,
\]
and
\[
        f(x)=\e^{\lambda x^q},\qquad \lambda>0,\quad0<q<1.
\]
\end{proposition}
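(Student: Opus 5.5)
The plan is to verify the four items of Hypothesis~\ref{hyp:endpoint} by one endpoint-Laplace comparison. The comparison uses the substitution $k=N-m$, $m=0,1,\ldots,N-1$, together with the concavity of $\log f$, which holds in both families: $\alpha\log x$ and $\lambda x^q$ with $0<q<1$ are concave. Write $g=\log f$, so $\beta_N=g'(N)$ and $\log r_{N,N-m}=g(N)-g(N-m)$. Concavity gives the global lower bound
\[
        \log r_{N,N-m}\ge \beta_N m,\qquad\text{hence}\qquad r_{N,N-m}\ge 1+\beta_N m .
\]
In the other direction, $g'(N-s)/g'(N)\to1$ uniformly for $0\le s\le \varepsilon_N N$ with $\varepsilon_N\to0$, since $g'(x)=\alpha/x$ or $\lambda q x^{q-1}$. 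This gives the matching upper bound $\log r_{N,N-m}\le(1+o(1))\beta_N m$ on the window $m\le\varepsilon_N N$. The relevant scales are $u_N(x)\sim\rho_N$ with $\rho_N=\log(1/\beta_N)$, so $u_N\beta_N\to0$. The natural window is $m\asymp 1/(u_N\beta_N)$. This equals $N/(\alpha u_N)$ in the Zipf case and $N^{1-q}/(\lambda q u_N)$ in the stretched case, and in both cases it is $o(N)$. Hence a choice of $\varepsilon_N$ covering the window is available.

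\textbf{Items (i) and (ii).} On the window $m\le M/(u_N\beta_N)$, the two-sided bounds give $r^a\to1$ and $u_N r=u_N+(1+o(1))u_N\beta_N m+o(1)$. The window sum is therefore a Riemann sum:
\[
        \e^{-u_N}\sum_{m\le M/(u_N\beta_N)}\e^{-(1+o(1))u_N\beta_N m}=(1+o(1))\,\frac{\e^{-u_N}}{u_N\beta_N}\,(1-\e^{-M}).
\]
Since $1/\beta_N=\e^{\rho_N}$, the right side is the claimed $\e^{\rho_N-u_N}/u_N$ up to the factor $1-\e^{-M}$. Beyond the window I use the lower bound $r\ge 1+\beta_N m$. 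The map $r\mapsto r^a\e^{-u_Nr}$ is decreasing once $r\ge a/u_N$, which holds for large $N$. So the tail is at most
\[
        \frac{\e^{-u_N}}{u_N\beta_N}\int_M^\infty(1+y/u_N)^a\e^{-y}\dd y ,
\]
which is a small fraction of the main term as $M\to\infty$. Uniformity for bounded $x$ is automatic, and replacing $u_N$ by $2u_N$ changes nothing because $2u_N\beta_N\to0$ as well.

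\textbf{Item (iii).} For $x\ge0$ the same monotone bound applies without any window, since $u_N(x)\ge u_N(0)\to\infty>j$. It gives
\[
        \sum_k r^j\e^{-u_Nr}\le \e^{-u_N}\sum_{m\ge0}(1+\beta_N m)^j\e^{-u_N\beta_N m}.
\]
I would bound the $m=0$ term by $\e^{-u_N}$. The rest is at most $C\e^{-u_N}/(u_N\beta_N)$, after substituting $y=u_N\beta_N m$ and using $(1+y/u_N)^j\le(1+y)^j$. This is exactly the envelope in \eqref{eq:endpoint-right-tail}, and it remains valid even when $u_N\beta_N$ is not small.

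\textbf{Item (iv), the main obstacle.} I expect the left envelope \eqref{eq:endpoint-left-tail} to be the hardest step, because it needs a lower bound on $M_N(x)=\sum_\ell\e^{-u_Nr_{N,\ell}}$ that is uniform over the whole range $0<u_N(x)\le u_N(-L)$. I would split this range at $u_N=\delta\rho_N$. For $\delta\rho_N\le u_N\le u_N(-L)$, I restrict the sum to the window and use the upper bound on $r$. This gives
\[
        M_N(x)\ge c\,\frac{\e^{\rho_N-u_N}}{u_N}\ge c\,\e^{-x},
\]
so the exponential factor in \eqref{eq:endpoint-left-tail} is at most $\exp\{-c\e^{L}\}$ up to the harmless $\e^{\e^{-u_N}}\le \e$. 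The prefactor $(u_N/\rho_N)^{j-1}\sum_k r^j\e^{-u_Nr}$ is at most polynomial in $\e^{-x}$, bounded by $C\e^{-x}$ from the item (i) computation with the constant $c$ weakened. Hence the integral over $x<-L$ is at most $\int_{-\infty}^{-L}\e^{-x}\exp\{-c\e^{-x}\}\dd x\to0$. For $0<u_N<\delta\rho_N$, a positive fraction of all $N$ indices has $u_N r_{N,\ell}\le 1$ or nearly so, namely those with $r_{N,\ell}\le\beta_N^{-\delta'}$. Then $M_N\ge N^{c}$, while the prefactor is crudely bounded by $N\sup_{r}r^j\e^{-u_Nr}\le N\,(j/u_N)^j$. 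The factor $(u_N/\rho_N)^{j-1}$ tames the small-$u_N$ singularity up to a single $u_N^{-1}$, which must be checked integrable against $\exp\{-N^c\}$, for instance by using $M_N\to N$ as $u_N\to0$. So this piece is $o(1)$. Making the count ``a positive fraction of indices'' precise in both families is the computation I would do first: in the Zipf case it is the count $k\ge N\beta_N^{\delta'/\alpha}$, and in the stretched case it follows from the explicit form of $g$.
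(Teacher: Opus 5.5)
Your items (i)--(iii) are correct and follow the paper's argument: the concavity bound $\log r_{N,N-m}\ge\beta_Nm$, a Riemann sum on the window $m\le M/(u_N\beta_N)$, and monotone comparison for $r\mapsto r^a\e^{-u_Nr}$ beyond it. The range $\delta\rho_N\le u_N\le u_N(-L)$ of item (iv) also works. There you should bound the prefactor by the monotone estimate of item (iii), which holds for all $u\ge j$, not by item (i), which only covers bounded $x$; the extra $\e^{-u}$ is absorbed since $u\beta_N\le\rho_N\beta_N\to0$. The gap is in the range $0<u_N<\delta\rho_N$, where two steps fail as written.

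First, your mechanism for $M_N\ge N^c$ is wrong.
Since $r_{N,k}\ge1$, no index has $ur_{N,k}\le1$ once $u>1$.
The indices with $r_{N,k}\le\beta_N^{-\delta'}$ only give $\e^{-ur_{N,k}}\ge\exp\{-\delta\rho_N\beta_N^{-\delta'}\}$, which is useless.
In the stretched family only about $N^{1-q}$ indices have $r_{N,k}\le2$, so no positive fraction of $N$ is available unless $u$ is exponentially small in $N^q$.
The conclusion is still true, by monotonicity. The sum $M_N=\sum_\ell\e^{-ur_{N,\ell}}$ is decreasing in $u$. Your own window bound at $u=\delta\rho_N$, where the window still has length $o(N)$, then gives $M_N(u)\ge M_N(\delta\rho_N)\ge\e^{-2}\e^{(1-\delta)\rho_N}/(\delta\rho_N)$ for all $u\le\delta\rho_N$. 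Finally, $\e^{\rho_N}=1/\beta_N$ equals $N/\alpha$ or $N^{1-q}/(\lambda q)$.

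Second, the crude bound $N(j/u)^j$ makes the integrand of order $N\rho_N^{-(j-1)}u^{-1}\e^{-N^c}$, which is not integrable at $u=0$. Appealing to $M_N\to N$ cannot help, because the exponential factor is already bounded by a $u$-independent constant. The singularity is an artifact of the bound, and it is removed by cutting at $u=1/R_N$, with $R_N=r_{N,1}$:
for $u<1/R_N$, use $\sum_kr_{N,k}^j\e^{-ur_{N,k}}\le NR_N^j$, so that piece is at most $\e N\rho_N^{-(j-1)}\e^{-N^c}/j$;
on $[1/R_N,\delta\rho_N]$, the factor $u^{-1}$ integrates to $\log(\delta\rho_NR_N)$, which is $O(\log N)$ or $O(N^q)$ and is killed by $\e^{-N^c}$.
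This is the same cut at $u=1/R_N$ that the paper makes.

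With these two repairs your route is valid and lighter than the paper's. The paper extends the window lower bound down to $u=1$ and then needs saturated counting estimates for $A_N(y)$ on $[1/R_N,1]$. A uniform bound $M_N\ge N^c$ on all of $(0,\delta\rho_N]$ lets crude estimates replace that work.
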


\begin{proof}
Put
\[
        S_{N,a}(u)=\sum_{k=1}^N r_{N,k}^a\e^{-ur_{N,k}},
        \qquad r_{N,k}=\frac{f(N)}{f(k)}.
\]
In both cases $\beta_N=f'(N)/f(N)\to0$, $\rho_N=\log(1/\beta_N)\to\infty$, and $g=f'/f$ is decreasing. Hence, for $m=N-k$,
\[
        \log r_{N,N-m}=\int_{N-m}^Ng(t)\dd t\ge \beta_Nm.
\]
We first prove the compact endpoint estimate. Uniformly for bounded $x$ and fixed $M>0$, if $0\le m\le M/(\beta_Nu_N(x))$, then
\[
        \log r_{N,N-m}=\beta_Nm+o\left(\frac1{u_N(x)}\right).
\]
Indeed, for $f(x)=x^\alpha$ this follows from
\[
        \alpha\log\frac{N}{N-m}=\frac{\alpha m}{N}+O\left(\frac{m^2}{N^2}\right),
        \qquad \beta_N=\frac\alpha N,
\]
and, for $f(x)=\e^{\lambda x^q}$, from
\[
        \lambda(N^q-(N-m)^q)=\lambda qN^{q-1}m+O(N^{q-2}m^2),
        \qquad \beta_N=\lambda qN^{q-1}.
\]
Therefore, on this window,
\[
        r_{N,N-m}^a\e^{-u_N(x)r_{N,N-m}}
        =\e^{-u_N(x)}\exp\{-u_N(x)\beta_Nm+o(1)\}.
\]
Consequently,
\[
\begin{aligned}
        \sum_{0\le m\le M/(\beta_Nu_N(x))}r_{N,N-m}^a\e^{-u_N(x)r_{N,N-m}}
        &\sim \e^{-u_N(x)}\sum_{0\le m\le M/(\beta_Nu_N(x))}\e^{-u_N(x)\beta_Nm}\\
        &\sim \frac{\e^{\rho_N-u_N(x)}}{u_N(x)}(1-\e^{-M}).
\end{aligned}
\]
The complementary part is uniformly negligible after $M\to\infty$. Since $u_N(x)\to\infty$ on compact $x$-sets and $z^a\e^{-u_N(x)z}$ is then decreasing for $z\ge1$,
\[
\begin{aligned}
        \sum_{m>M/(\beta_Nu_N(x))}r_{N,N-m}^a\e^{-u_N(x)r_{N,N-m}}
        &\le \sum_{m>M/(\beta_Nu_N(x))}\e^{a\beta_Nm}\e^{-u_N(x)\e^{\beta_Nm}}\\
        &\le C\beta_N^{-1}\e^{-u_N(x)}\int_{M/u_N(x)}^\infty \e^{ay}\e^{-u_N(x)(\e^y-1)}\dd y\\
        &\le C\e^{-cM}\frac{\e^{\rho_N-u_N(x)}}{u_N(x)}.
\end{aligned}
\]
This proves Hypothesis~\ref{hyp:endpoint}(i). The same argument with $u_N(x)$ replaced by $2u_N(x)$ and $a=0$ proves Hypothesis~\ref{hyp:endpoint}(ii).

For the right-tail envelope, take $u=u_N(x)$ with $x\ge0$. The previous monotone comparison and a Riemann-sum bound give, for fixed $j\ge2$,
\[
        S_{N,j}(u)\le \sum_{m=0}^{N-1}\e^{j\beta_Nm}\e^{-u\e^{\beta_Nm}}
        \le C\e^{-u}+C\beta_N^{-1}\int_0^\infty \e^{jy}\e^{-u\e^y}\dd y
        \le C_j\left(\e^{-u}+\frac{\e^{\rho_N-u}}{u}\right).
\]
Here the last integral is at most $C_j\e^{-u}/u$. Thus Hypothesis~\ref{hyp:endpoint}(iii) holds.

It remains to verify the left-tail envelope. Write $u=u_N(x)$, so $\dd x=\dd u$, and split the region $x<-L$, $u>0$, into $u\ge1$ and $0<u<1$.

First let $u\ge1$. In the range $1\le u\le\rho_N-\log\rho_N-L$, the additive $\e^{-u}$ term in the preceding bound is absorbed by $\beta_N^{-1}\e^{-u}/u$. Also, for a small fixed $c_0>0$ and all $0\le m\le c_0/(\beta_Nu)$, the explicit formulae above give $r_{N,N-m}\le1+C/u$. Hence
\[
        S_{N,0}(u)-\e^{-u}\ge c\frac{\e^{-u}}{\beta_Nu}.
\]
The integrand in Hypothesis~\ref{hyp:endpoint}(iv) is therefore bounded by
\[
        C\left(\frac{u}{\rho_N}\right)^{j-1}\frac{\e^{\rho_N-u}}{u}
        \exp\left\{-c\frac{\e^{\rho_N-u}}{u}\right\}.
\]
Since $\e^{\rho_N-u}/u=(\rho_N/u)\e^{-x}$ and $u\le\rho_N-\log\rho_N-L$ on the left tail, this is dominated by
\[
        C\e^{-x}\exp\{-c\e^{-x}\},
\]
whose integral over $(-\infty,-L)$ tends to zero as $L\to\infty$.

Now let $0<u<1$. Put $R_N=r_{N,1}$ and
\[
        A_N(y)=\#\{k:r_{N,k}\le y\},\qquad 1\le y\le R_N.
\]
For the power array,
\[
        A_N(y)=N-\lceil Ny^{-1/\alpha}\rceil+1,
\]
whereas for the stretched-exponential array,
\[
        A_N(y)=N-\left\lceil\left(N^q-\frac{\log y}{\lambda}\right)_+^{1/q}\right\rceil+1.
\]
These formulae imply the saturated counting estimates
\[
        c\min\{N,\beta_N^{-1}\log y\}-C
        \le A_N(y)\le C\left(1+\min\{N,\beta_N^{-1}\log y\}\right).
\]
For $1/R_N\le u\le1$, the indices with $r_{N,k}\le1/u$ contribute at least $\e^{-1}$ to $S_{N,0}(u)$, and hence
\[
        S_{N,0}(u)-\e^{-u}\ge c\left(\min\{N,\beta_N^{-1}\log(1/u)\}-C\right)_+.
\]
The same explicit inverses, viewed as Stieltjes changes of variables, give
\[
        S_{N,j}(u)\le C_j\beta_N^{-1}u^{-j},\qquad 1/R_N\le u\le1.
\]
Indeed, in the power case the sum is bounded by a constant times $N\int_1^{R_N}y^{j-1/\alpha-1}\e^{-uy}\dd y$, and in the stretched-exponential case the inverse change of variables has derivative bounded by $C\beta_N^{-1}/y$. Thus the contribution from $1/R_N\le u\le1$ is at most
\[
        C_j\rho_N^{-(j-1)}\beta_N^{-1}\int_0^{\log R_N}\exp\{-c(\min\{N,\beta_N^{-1}v\}-C)_+\}\dd v,
\]
after the substitution $v=\log(1/u)$. The integral is $O(\beta_N)+O((\log R_N)\e^{-cN})$ for the two arrays, and so this part is $O(\rho_N^{-(j-1)})=o(1)$.

Finally, if $0<u<1/R_N$, then $ur_{N,k}\le1$ for every $k$, so $S_{N,0}(u)\ge \e^{-1}N$. Also
\[
        \sum_{k=1}^Nr_{N,k}^j\le C_j\beta_N^{-1}R_N^j
\]
for both arrays, by the same inverse comparisons. Hence this remaining part is bounded by
\[
        C_j\rho_N^{-(j-1)}\beta_N^{-1}R_N^j\e^{-cN}\int_0^{1/R_N}u^{j-1}\dd u
        \le C_j\rho_N^{-(j-1)}\beta_N^{-1}\e^{-cN},
\]
which tends to zero. Combining the three regions proves Hypothesis~\ref{hyp:endpoint}(iv) and completes the proof.
\end{proof}

\end{document}